\renewcommand\tilde{\widetilde}
\newtheorem{nnassumption}{\bf Assumption}
\newtheorem{nntheorem}{\bf Theorem}
\newenvironment{theorem}{\begin{nntheorem}\it}{\end{nntheorem}}
\newtheorem{nndefinition}{\bf Definition}
\newtheorem{nnproposition}{\bf Proposition}
\newenvironment{proposition}{\begin{nnproposition}\it}{\end{nnproposition}}
\newtheorem{nnproblem}{\bf Problem}
\newtheorem{nnlemma}{\bf Lemma}
\newenvironment{lemma}{\begin{nnlemma}\it}{\end{nnlemma}}
\newtheorem{nnremark}{\bf Remark}
\newenvironment{proof}{{\bf Proof.}}{\hfill \hspace*{1pt}\hfill $\bullet$}
\def\sat{\mathrm{sat}}
\begin{document}
\begin{frontmatter}

\title{Global stabilization of a Korteweg-de Vries equation with a distributed control saturated in $L^2$-norm} 

\thanks[footnoteinfo]{This works has been partially supported by Basal Project FB0008 AC3E.}

\author[First]{Swann Marx}, 
\author[Second]{Eduardo Cerpa}, 
\author[First]{Christophe Prieur},
\author[Third]{Vincent Andrieu}

\address[First]{Gipsa-lab, Department of Automatic Control, Grenoble Campus, 11 rue des Math\'ematiques, BP 46, 38402 Saint Martin d'H\`eres Cedex, {\tt\small E-mail: \{swann.marx,christophe.prieur\}@gipsa-lab.fr}.}
\address[Second]{Departamento de Matem\'atica, Universidad T\'ecnica Federico Santa Mar\'ia, Avda. Espa\~na 1680, Valpara\'iso, Chile, {\tt\small E-mail: eduardo.cerpa@usm.cl}.}
\address[Third]{Universit\'e Lyon 1 CNRS UMR 5007 LAGEP, France and Fachbereich C - Mathematik und Naturwissenschaften, Bergische Universit\"at Wuppertal, Gau\ss stra\ss e 20, 42097 Wuppertal, Germany, {\tt\small E-mail: vincent.andrieu@gmail.com}.}

\begin{abstract}                
This article deals with the design of saturated controls in the context of partial differential equations. It is focused on a Korteweg-de Vries equation, which is a nonlinear mathematical model of waves on shallow water surfaces. The aim of this article is to study the influence of a saturating in $L^2$-norm distributed control on the well-posedness and the stability of this equation. The well-posedness is proven applying a Banach fixed point theorem. The proof of the asymptotic stability of the closed-loop system  is tackled with a Lyapunov function together with a sector condition describing the saturating input. Some numerical simulations illustrate the stability of the closed-loop nonlinear partial differential equation.
\end{abstract}

\begin{keyword}
Partial differential equation; saturation control; nonlinear control system; global stability; Lyapunov function.
\end{keyword}

\end{frontmatter}

\section{Introduction}
 
The Korteweg-de Vries equation (KdV for short)
\begin{equation}
y_t+y_{x}+y_{xxx}+yy_x=0,
\end{equation}
is a mathematical model of waves on shallow water surfaces. Its stabilizability properties have been deeply studied with no constraints on the control, as reviewed in 
 \cite{cerpa2013control, rosier-zhang}. In this article, we focus on the following controlled  KdV equation
\begin{equation}
\label{nlkdv}
\left\{
\begin{split}
&y_t+y_x+y_{xxx}+yy_x+f=0,\: (t,x)\in [0,+\infty)\times [0,L],\\
&y(t,0)=y(t,L)=y_x(t,L)=0,\: t\in[0,+\infty),\\
&y(0,x)=y_0(x),\: x\in [0,L],
\end{split}
\right.
\end{equation}
where $y$ stands for the state and $f$ for the control. As studied in \cite{rosier1997kdv}, if $f=0$ and
\begin{equation}
\label{critical-length}
L\in\left\{ 2\pi\sqrt{\frac{k^2+kl+l^2}{3}}\,\Big\slash \,k,l\in\mathbb{N}^*\right\},
\end{equation}
then, there exist solutions of the linearized version of \eqref{nlkdv} written as follows
\begin{equation}
\label{lkdv}
\left\{
\begin{split}
&y_t+y_x+y_{xxx}=0,\\
&y(t,0)=y(t,L)=0,\\
&y_x(t,L)=0,\\
&y(0,x)=y_0(x),
\end{split}
\right.
\end{equation}
for which the $L^2(0,L)$-energy does not decay to zero. For instance, if $L=2\pi$ and $y_0=1-\cos(x)$ for all $x\in [0,L]$, then $y(t,x)=1-\cos(x)$ is a stationary solution of \eqref{nlkdv} conserving the energy for any time $t$. 

Note however that, if $L=2\pi$ and $f=0$, the origin of \eqref{nlkdv} is locally asymptotically stable as stated in \cite{chu2013asymptotic}. Then the nonlinear version of the Korteweg-de Vries equation has better results in terms of stability than the linearized one. A similar phenomenon also appears when studying the controllability of the equation with a Neumann control (see e.g. \cite{coroncrepeau2004missed} or \cite{cerpa2007siam}).

In the literature there are some methods stabilizing the KdV equation \eqref{nlkdv} with boundary control, see \cite{cerpa2009rapid, cerpa_coron_backstepping, marx-cerpa} or distributed controls as studied in \cite{pazoto2005localizeddamping, rosier2006global}. 

In this paper, we deal with the case where the control is saturated. Indeed, in most applications, actuators are limited due to some physical constraints and the control input has to be bounded. Neglecting the amplitude actuator limitation can be source of undesirable and catastrophic behaviors for the closed-loop system. Nowadays, numerous techniques are available (see e.g. \cite{tarbouriech2011book_saturating,teel1992globalsaturation,sussmann1991saturation}) and such systems can be analyzed with an appropriate Lyapunov function and a sector condition of the saturation map, as introduced in \cite{tarbouriech2011book_saturating} or \cite{zaccarian2011modern}.

To the best of our knowledge, there are few papers studying this topic in the infinite dimensional case. Among them, there are \cite{lasiecka2002saturation}, \cite{prieur2016wavecone}, where a wave equation equipped with a saturated distributed actuator is considered and \cite{daafouz2014nonlinear}, where a coupled PDE/ODE system modeling a switched power converter with a transmission line is considered and, due to some restrictions on the system, a saturated feedback has to be designed. There exist also some papers using the nonlinear semigroup theory and focusing on abstract systems (\cite{Logemann98time-varyingand}, \cite{seidman2001note}, \cite{slemrod1989mcss}).
 
In \cite{mcpa2015kdv_saturating}, in which it is considered a linear Korteweg-de Vries equation with a saturated distributed control, nonlinear semigroup theory is applied. In the case of the present paper, since the term $yy_x$ is not globally Lipschitz, such a theory is harder to use. Thus, we aim at studying a particular nonlinear partial differential equation without seeing it as an abstract control system and without using the nonlinear semigroup theory. Moreover, in the case of the present paper, the saturation is borrowed from \cite{slemrod1989mcss} and allows us to give explicitely the decay rate for bounded initial conditions, although the saturation used in \cite{mcpa2015kdv_saturating} is the classical one and no decay rate is given. 

This article is organized as follows. In Section \ref{sec_mainresults}, we present our main results about the well posedness and the stability of \eqref{nlkdv} in presence of saturating control. Sections \ref{sec_wp} and \ref{sec_stab} are devoted to prove these results by using respectively a Banach fixed-point theorem and some Lyapunov techniques together with a sector condition describing the saturating input. In Section \ref{sec_simu}, we give some simulations of the equation looped by a saturated feedback law. Section \ref{sec_conc} collects some concluding remarks and possible further research lines. 

\textbf{Notation:} $y_t$ (resp. $y_x$) stands for the partial derivative of the function $y$ with respect to $t$ (resp. $x$). Given $L>0$, $\Vert \cdot\Vert_{L^2(0,L)}$ denotes the norm in $L^2(0,L)$ and  $H^1(0,L)$ (resp. $H^3(0,L)$) is the set of all functions $u\in L^2(0,L)$ such that $u_x\in L^2(0,L)$ (resp. $u_x, u_{xx}, u_{xxx}\in L^2(0,L)$).  A function $\alpha$ is said to be a class $\mathcal{K}$ function if it is a nonnegative, an increasing function and if $\alpha(0)=0$.



\section{Main results}
\label{sec_mainresults}
Setting a control $f(t,x)=ay(t,x)$ with $a>0$, we obtain that \eqref{nlkdv} is stabilized in the $L^2(0,L)$-norm topology. Indeed, performing some integrations by parts, we obtain, at least formally
\begin{equation}
\frac{1}{2}\frac{d}{dt}\int_0^L |y(t,x)|^2dx=-|y_x(t,0)|^2-a\int_0^L |y(t,x)|^2dx
\end{equation}
and thus
\begin{equation}
\label{rapid-stab}
\Vert y(t,.)\Vert_{L^2(0,L)}\leq e^{-at}\Vert y_0\Vert_{L^2(0,L)},\quad \forall t\geq 0.
\end{equation}
Let us consider the impact of the constraint on the control. For infinite-dimensional systems, a way to take into account this constraint is to use the following saturation function, that is for all $s\in L^2(0,L)$ and for all $x\in [0,L]$,
\begin{equation}
\label{function-saturation}
\sat(s)(x)=\left\{
\begin{array}{rl}
&\hspace{-1.3cm}s(x)\hspace{0.6cm}\text{ if }\Vert s\Vert_{L^2(0,L)}\leq u_s,\\
\frac{s(x)u_s}{\Vert s\Vert_{L^2(0,L)}}&\text{ if } \Vert s\Vert_{L^2(0,L)}\geq u_s.
\end{array}
\right.
\end{equation}
Let us consider the KdV equation controlled by a saturated distributed control as follows
\begin{equation}
\label{nlkdv_sat}
\left\{
\begin{split}
&y_t+y_x+y_{xxx}+yy_x+\sat(ay)=0,\\
&y(t,0)=y(t,L)=0,\\
&y_x(t,L)=0,\\
&y(0,x)=y_0(x).
\end{split}
\right.
\end{equation}
Let us state the main results of this paper.

\begin{theorem}[Well posedness]
\label{nl-theorem-wp}
For any initial conditions $y_0\in L^2(0,L)$, there exists a unique mild solution $y\in C(0,T;L^2(0,L))\cap L^2(0,T;H^1(0,L))$ to \eqref{nlkdv_sat}. 
\end{theorem}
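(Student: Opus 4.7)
The plan is to rewrite \eqref{nlkdv_sat} as an integral equation via Duhamel's formula and apply the Banach fixed-point theorem, treating both the KdV nonlinearity $yy_x$ and the saturation as perturbations of the linear KdV semigroup. Let $Ay := -y_x - y_{xxx}$ with $D(A) := \{y \in H^3(0,L) : y(0)=y(L)=y_x(L)=0\}$; by classical results on linear KdV \cite{rosier1997kdv}, $A$ generates a $C_0$-semigroup $\{W(t)\}_{t\geq 0}$ on $L^2(0,L)$ which enjoys the Kato-type hidden regularity estimate
\begin{equation*}
\|W(\cdot)y_0\|_{C([0,T];L^2(0,L))\,\cap\, L^2(0,T;H^1(0,L))} \leq C_T \|y_0\|_{L^2(0,L)}.
\end{equation*}
A mild solution of \eqref{nlkdv_sat} is then a fixed point of
\begin{equation*}
\Phi(y)(t) = W(t)y_0 - \int_0^t W(t-s)\bigl[y(s)y_x(s) + \sat(ay(s))\bigr]\,ds
\end{equation*}
in the Banach space $\mathcal{B}_T := C([0,T];L^2(0,L)) \cap L^2(0,T;H^1(0,L))$.

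I would then establish the two nonlinear estimates required to close the contraction. For the transport term, I would invoke the standard KdV multilinear bound
\begin{equation*}
\|yy_x\|_{L^1(0,T;L^2(0,L))} \leq C T^{1/2}\, \|y\|_{C([0,T];L^2)}\, \|y\|_{L^2(0,T;H^1)},
\end{equation*}
together with its bilinear analogue for $y_1y_{1,x}-y_2y_{2,x}$. For the saturation, the decisive observation is that the map $\sat$ defined by \eqref{function-saturation} is precisely the metric projection of $L^2(0,L)$ onto the closed ball of radius $u_s$; being a projection onto a closed convex set, it is $1$-Lipschitz, and hence
\begin{equation*}
\|\sat(ay_1)-\sat(ay_2)\|_{L^1(0,T;L^2)} \leq aT\,\|y_1-y_2\|_{C([0,T];L^2)}.
\end{equation*}
Combining these bounds with the semigroup estimate above, one shows that for $T$ small enough (depending on $\|y_0\|_{L^2}$ and $a$), $\Phi$ sends a suitable closed ball of $\mathcal{B}_T$ into itself and is a strict contraction there, yielding a unique mild solution on $[0,T]$.

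To upgrade this local solution to a global one, I would use the a priori energy identity obtained by formally multiplying \eqref{nlkdv_sat} by $y$ and integrating on $[0,L]$ (then justifying by a density argument on $D(A)$):
\begin{equation*}
\frac{1}{2}\frac{d}{dt}\|y(t,\cdot)\|_{L^2(0,L)}^2 + \frac{1}{2}|y_x(t,0)|^2 + \int_0^L y\,\sat(ay)\,dx = 0.
\end{equation*}
A direct check on both branches of \eqref{function-saturation} gives $\int_0^L y\,\sat(ay)\,dx \geq 0$, so $\|y(t,\cdot)\|_{L^2}$ is non-increasing; in particular the $L^2$-norm cannot blow up, and the local solution extends to $[0,T]$ for any $T>0$. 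I expect the real technical obstacle to lie not in the saturation but in the transport term $yy_x$: closing the contraction hinges on the hidden-regularity estimate for the linear KdV semigroup and the resulting multilinear bound, which are standard in the KdV literature but must be handled carefully to match the radius of the invariant ball against the contraction constant.
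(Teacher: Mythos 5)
Your proposal follows essentially the same route as the paper: Duhamel formula, Banach fixed point in $C([0,T];L^2)\cap L^2(0,T;H^1)$ using Rosier's linear estimates and the bilinear bound on $yy_x$, then global extension via the energy identity and the sign of $\int_0^L y\,\sat(ay)\,dx$. Two points of comparison are worth recording. First, your observation that $\sat$ is exactly the metric projection of $L^2(0,L)$ onto the closed ball of radius $u_s$, hence nonexpansive, is correct and actually sharper than what the paper uses: the paper cites Slemrod's Theorem~5.1 for a Lipschitz constant of $3$, whereas the projection argument gives constant $1$ for free. Second, you are slightly lighter than the paper on uniqueness: the contraction only gives uniqueness \emph{within the invariant ball} $S_{T',r}$, and a second mild solution in $\mathcal{B}(T)$ could a priori leave that ball. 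The paper closes this with a separate Gronwall-type lemma (Lemma~\ref{uniqueness-coron-crepeau}, in the spirit of Coron--Cr\'epeau) bounding $\|y-z\|$ by $\|y_0-z_0\|$ times an exponential of the $L^2(0,T;H^1)$ norms; you should either invoke such an estimate or argue that any solution in $\mathcal{B}(T)$ remains, by continuity, in a ball on which the local Lipschitz estimate applies, and then propagate. Relatedly, the paper's a priori bound (Lemma~\ref{global-estimation}) also controls $\|y_x\|_{L^2(0,T;L^2)}$ via the multiplier $xy$, not just the decreasing $L^2$ norm; your continuation argument survives without it (the local existence time depends only on $\|y_0\|_{L^2}$, which is nonincreasing, so finitely many uniform steps cover $[0,T]$), but the $H^1$ bound is what feeds the constants in the uniqueness lemma, so you will need some version of it if you take that route.
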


\begin{theorem}[Global asymptotic stability]
\label{glob_as_stab}
Given positive values $a$ and $u_s$, there exists a class $\mathcal{K}$ function $\alpha:\mathbb{R}_{\geq 0}\rightarrow \mathbb{R}_{\geq 0}$ such that for a given $y_0\in L^2(0,L)$, the mild solution $y$ to (\ref{nlkdv_sat}) satisfies,
\begin{equation}
\Vert y(t,.)\Vert_{L^2(0,L)}\leq \alpha(\Vert y_0\Vert_{L^2(0,L)})e^{-at},\quad \forall t\geq 0.
\end{equation}
Moreover, for bounded initial conditions, we could estimate the decay rate of the solution. In other words, given a positive value $r$, for any initial condition $y_0\in L^2(0,L)$ such that $\Vert y_0\Vert_{L^2(0,L)}\leq r$, any mild solution $y$ to \eqref{nlkdv_sat} satisfies,
\begin{equation}
\label{local_as_stab_estim}
\Vert y(t,.)\Vert_{L^2(0,L)}\leq \Vert y_0\Vert_{L^2(0,L)}e^{-\mu t},\quad\forall t\geq 0,
\end{equation}
where $\mu$ is defined as follows
\begin{equation}
\label{decay-rate-local}
\mu:=\min\left\{a,\frac{u_s}{r}\right\}.
\end{equation} 
\end{theorem}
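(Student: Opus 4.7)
I will take $V(y):=\tfrac{1}{2}\Vert y\Vert^2_{L^2(0,L)}$ as a Lyapunov candidate. Multiplying \eqref{nlkdv_sat} by $y$ and integrating on $[0,L]$, the boundary conditions kill the transport and Burgers contributions, since $\int_0^L y y_x\,dx=\tfrac{1}{2}[y^2]_0^L=0$ and $\int_0^L y^2 y_x\,dx=\tfrac{1}{3}[y^3]_0^L=0$; two integrations by parts on the dispersive term together with $y(t,0)=y(t,L)=y_x(t,L)=0$ give $\int_0^L y y_{xxx}\,dx=\tfrac{1}{2}|y_x(t,0)|^2$. This yields
\begin{equation*}
\dot V(y(t))=-\tfrac{1}{2}|y_x(t,0)|^2-\int_0^L y(t,x)\,\sat(ay)(x)\,dx\leq -\int_0^L y\,\sat(ay)\,dx.
\end{equation*}
To make this identity rigorous for mild solutions (where $y_{xxx}$ is not defined pointwise), I would regularise $y_0$ in $H^3(0,L)$, invoke the corresponding strong-solution theory for KdV to justify the integrations by parts, and pass to the limit using the continuous dependence delivered by Theorem \ref{nl-theorem-wp}.

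The $L^2$-saturation turns the right-hand side into an explicit function of $\Vert y\Vert_{L^2(0,L)}$: inspection of the two branches of \eqref{function-saturation} yields
\begin{equation*}
\int_0^L y\,\sat(ay)\,dx=\begin{cases} a\,\Vert y\Vert^2_{L^2(0,L)}, & \Vert ay\Vert_{L^2(0,L)}\leq u_s,\\ u_s\,\Vert y\Vert_{L^2(0,L)}, & \Vert ay\Vert_{L^2(0,L)}\geq u_s.\end{cases}
\end{equation*}
In particular $\dot V\leq 0$, so $t\mapsto \Vert y(t,\cdot)\Vert_{L^2(0,L)}$ is nonincreasing. This monotonicity is the key structural fact, because it lets me propagate a uniform $L^2$-bound and therefore treat the saturation globally in time.

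For the local estimate \eqref{local_as_stab_estim}, assume $\Vert y_0\Vert_{L^2(0,L)}\leq r$, so that $\Vert y(t,\cdot)\Vert_{L^2(0,L)}\leq r$ for all $t\geq 0$. On the unsaturated branch $\dot V\leq -a\Vert y\Vert^2_{L^2(0,L)}$, hence $\tfrac{d}{dt}\Vert y\Vert_{L^2(0,L)}\leq -a\Vert y\Vert_{L^2(0,L)}$; on the saturated branch $\dot V\leq -u_s\Vert y\Vert_{L^2(0,L)}$, hence $\tfrac{d}{dt}\Vert y\Vert_{L^2(0,L)}\leq -u_s\leq -(u_s/r)\Vert y\Vert_{L^2(0,L)}$. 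In either case $\tfrac{d}{dt}\Vert y\Vert_{L^2(0,L)}\leq -\mu\,\Vert y\Vert_{L^2(0,L)}$ with $\mu=\min\{a,u_s/r\}$, and Gronwall's lemma yields \eqref{local_as_stab_estim}. For the global class-$\mathcal{K}$ bound, I would split time at $T_0:=\inf\{t\geq 0:\Vert y(t,\cdot)\Vert_{L^2(0,L)}\leq u_s/a\}$. On $[0,T_0)$ only the saturated branch is active, so $\tfrac{d}{dt}\Vert y\Vert_{L^2(0,L)}\leq -u_s$ and $T_0\leq \Vert y_0\Vert_{L^2(0,L)}/u_s$; on $[T_0,\infty)$, by monotonicity only the unsaturated branch is active, so $\Vert y(t,\cdot)\Vert_{L^2(0,L)}\leq (u_s/a)\,e^{-a(t-T_0)}$. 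Combining the two bounds after multiplication by $e^{at}$ produces $\Vert y(t,\cdot)\Vert_{L^2(0,L)}\leq \alpha(\Vert y_0\Vert_{L^2(0,L)})e^{-at}$ with, for instance, $\alpha(r):=r\,e^{ar/u_s}$, which is of class $\mathcal{K}$.

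The hardest step is the justification of the Lyapunov identity for mild solutions, because $\sat$ is only globally Lipschitz (not $C^1$) across the sphere $\Vert s\Vert_{L^2(0,L)}=u_s$, so the chain rule cannot be applied naively; the cure is the density argument above. Once this is in place, the remainder of the proof reduces to elementary ODE comparisons for the scalar nonincreasing quantity $t\mapsto \Vert y(t,\cdot)\Vert_{L^2(0,L)}$.
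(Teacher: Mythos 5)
Your proposal is correct and follows the same overall strategy as the paper: the Lyapunov functional $\tfrac12\Vert y\Vert^2_{L^2(0,L)}$, the dissipativity estimate $\dot V\leq-\int_0^L y\,\sat(ay)\,dx$, a semi-global decay rate $\mu=\min\{a,u_s/r\}$ obtained from the two branches of the saturation, and then a two-phase argument (reach the ball of radius $u_s/a$, where the control is unsaturated and the rate $a$ of \eqref{rapid-stab} takes over). Two details differ. First, where the paper isolates a pointwise sector condition $\bigl(\sat(as)(x)-k(r)as(x)\bigr)s(x)\geq 0$ with $k(r)=\min\{u_s/(ar),1\}$ (Lemma \ref{sat-l2-local}), you compute $\int_0^L y\,\sat(ay)\,dx$ explicitly as $a\Vert y\Vert^2_{L^2(0,L)}$ or $u_s\Vert y\Vert_{L^2(0,L)}$ on the two branches; for this particular $L^2$-valued saturation the two are equivalent, the sector condition being the form that generalizes to other saturation maps. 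Second, for the global bound the paper estimates the entry time into the unsaturated ball from the semi-global exponential estimate, getting $T_r=\mu^{-1}\ln(ar/u_s)$, whereas you exploit the fact that on the saturated branch the decay is actually linear, $\tfrac{d}{dt}\Vert y\Vert_{L^2(0,L)}\leq-u_s$, which yields $T_0\leq\Vert y_0\Vert_{L^2(0,L)}/u_s$ and the explicit, and in fact sharper, gain $\alpha(s)=s\,e^{as/u_s}$ (the paper's choice grows like $(as/u_s)^{as/u_s}$ for large $s$). Your remark that the energy identity must be justified for mild solutions by regularization of the initial data is a point the paper glosses over, so no gap there; just make sure the division by $\Vert y\Vert_{L^2(0,L)}$ when passing from $\dot V$ to $\tfrac{d}{dt}\Vert y\Vert_{L^2(0,L)}$ is only performed on $[0,T_0)$ where $\Vert y\Vert_{L^2(0,L)}\geq u_s/a>0$, as you implicitly do.
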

By a scaling, we may assume, without loss of generality, that either $a$ or $u_s$ is $1$. However, Theorem \ref{glob_as_stab} shows that saturating a controller insuring a rapid stabilization makes the origin globally asymptotically stable. We cannot select anymore the decay rate of the convergence.



\section{Proof of the main results}

\subsection{Well-posedness}
\label{sec_wp}

\subsubsection{Linear system.}

Before proving the well-posedness of \eqref{nlkdv_sat}, let us recall some useful results on the linear system \eqref{lkdv}. To do that, consider the operator defined by
$$
D(A)=\lbrace w\in H^3(0,L),\: w(0)=w(L)=w^\prime(L)=0\rbrace,
$$
$$
A:w\in D(A)\subset L^2(0,L)\longmapsto (-w^\prime-w^{\prime\prime\prime})\in L^2(0,L).
$$

It is easy to prove that $A$ generates a strongly continuous semigroup of contractions which we will denote by $W(t)$. We have the following theorem proven in \cite{rosier1997kdv}
\begin{theorem}[Well-posedness of \eqref{lkdv}, \cite{rosier1997kdv}]
\label{lkdv-wp}
For any initial conditions $y_0\in L^2(0,L)$, there exists a unique mild solution $y\in C(0,T;L^2(0,L))\cap L^2(0,T;H^1(0,L))$ to (\ref{lkdv}). Moreover, there exists $C>0$ such that the solution to (\ref{lkdv}) satisfies
\begin{equation}
\label{dissipativity-regularity}
\Vert y\Vert_{C(0,T;L^2(0,L))}+\Vert y\Vert_{L^2(0,T;H^1(0,L))}\leq C_1\Vert y_0\Vert_{L^2(0,L)}
\end{equation}
and the extra regularity
\begin{equation}
\label{extra-regularity}
\Vert y_x(.,0)\Vert_{L^2(0,T)}\leq \Vert y_0\Vert_{L^2(0,L)}.
\end{equation}
\end{theorem}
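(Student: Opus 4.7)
The plan is to obtain the three conclusions in three stages: existence/uniqueness through semigroup theory, the $C([0,T];L^2)$ bound through a direct energy identity (which also yields the trace estimate \eqref{extra-regularity}), and the $L^2(0,T;H^1)$ bound through a weighted multiplier.

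First I would invoke the Lumer--Phillips theorem to show that the operator $A$ generates a $C_0$-semigroup of contractions $W(t)$ on $L^2(0,L)$. For $w\in D(A)$, integrating by parts under the boundary conditions $w(0)=w(L)=w'(L)=0$ gives $\langle Aw,w\rangle_{L^2(0,L)} = -\tfrac{1}{2}|w'(0)|^2 \le 0$, so $A$ is dissipative. A parallel computation shows that the formal adjoint $A^{*}w = w' + w'''$ with domain $\{w\in H^3(0,L) : w(0)=w(L)=w'(0)=0\}$ is also dissipative ($\langle A^{*}w,w\rangle = -\tfrac{1}{2}|w'(L)|^2$). Together with the fact that both $A$ and $A^*$ are densely defined and closed, this yields that $A$ is $m$-dissipative, hence generates the contraction semigroup, and we define the mild solution by $y(t)=W(t)y_0$.

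Next, for strong solutions (that is, $y_0\in D(A)$) I would multiply the equation by $y$ and integrate over $[0,L]$. The convective term $\int_0^L y\, y_x \, dx$ vanishes because of the Dirichlet conditions at $0$ and $L$, while two integrations by parts on the dispersive term produce the boundary contribution $\tfrac12 |y_x(t,0)|^2$. Integrating in time one obtains the identity
\begin{equation*}
\|y(T,\cdot)\|_{L^2(0,L)}^2 + \int_0^T |y_x(t,0)|^2 \, dt = \|y_0\|_{L^2(0,L)}^2,
\end{equation*}
which simultaneously gives the $C([0,T];L^2)$ bound and the hidden boundary regularity \eqref{extra-regularity}.

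To reach $L^2(0,T;H^1)$ I would use the classical KdV multiplier $xy$. Multiplying the equation by $xy$ and performing the integrations by parts (using $y(t,0)=y(t,L)=y_x(t,L)=0$) produces an identity of the form
\begin{equation*}
\tfrac12 \tfrac{d}{dt}\!\int_0^L x\, y(t,x)^2\,dx + \tfrac{3}{2}\int_0^L y_x(t,x)^2\, dx = \tfrac{1}{2}\|y(t,\cdot)\|_{L^2(0,L)}^2 .
\end{equation*}
Integrating on $[0,T]$ and using the previously obtained contraction estimate $\|y(t,\cdot)\|_{L^2(0,L)}\le\|y_0\|_{L^2(0,L)}$ delivers a bound on $\int_0^T\|y_x(t,\cdot)\|_{L^2(0,L)}^2\,dt$ by $\tfrac{L+T}{3}\|y_0\|_{L^2(0,L)}^2$. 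Combined with the $C([0,T];L^2)$ estimate this yields \eqref{dissipativity-regularity} for some $C_1=C_1(L,T)$. The conclusion for general $y_0\in L^2(0,L)$ follows by density, since all estimates are linear in $y_0$. The main technical obstacle is obtaining the hidden trace regularity \eqref{extra-regularity}: it is not a consequence of semigroup theory alone and relies crucially on the specific cancellations provided by the boundary conditions in the energy identity; checking these cancellations carefully, and then extending the identity to mild solutions by approximating $y_0$ with elements of $D(A)$, is the delicate point.
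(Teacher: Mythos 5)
Your proof is correct and follows the standard route: the paper itself does not prove this theorem but quotes it from \cite{rosier1997kdv}, and your argument (Lumer--Phillips via dissipativity of $A$ and $A^*$, the energy identity $\Vert y(T,\cdot)\Vert_{L^2(0,L)}^2+\int_0^T|y_x(t,0)|^2dt=\Vert y_0\Vert_{L^2(0,L)}^2$ for the contraction and hidden trace bounds, and the multiplier $xy$ for the $L^2(0,T;H^1(0,L))$ estimate, extended to mild solutions by density) is exactly the argument of the cited reference and is consistent with the multiplier computations the paper reuses later in Lemma \ref{global-estimation}. No gaps.
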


To ease the reading, let us denote the following Banach space, for all $T>0$, 
$$\mathcal{B}(T):=C(0,T;L^2(0,L))\cap L^2(0,T;H^1(0,L))$$
endowed with the norm 
\begin{equation}
\Vert y\Vert_{\mathcal{B}(T)}=\sup_{t\in [0,T]}\Vert y(t,.)\Vert_{L^2(0,L)}+\left(\int_0^T \Vert y(t,.)\Vert^2_{H^1(0,L)}dt\right)^{\frac{1}{2}}
\end{equation}

Before studying the well-posedness of (\ref{nlkdv_sat}), we need a well-posedness result with a right-hand side. Thus, given $g\in L^1(0,T;L^2(0,L)$, let us consider $y$ the unique solution \footnote{It follows from the semigroup theory the existence and the unicity of $y$ when $g\in L^1(0,T;L^2(0,L)$ (see \cite[Chapter 4]{pazy1983semigroups}).} to the following inhomogeneous problem:
\begin{equation}
\label{kdv-2}
\left\{
\begin{split}
&y_t+y_x+y_{xxx}=g,\\
&y(t,0)=y(t,L)=0,\\
&y_x(t,L)=0,\\
&y(0,.)=y_0.
\end{split}
\right.
\end{equation}

Note that we need the following property on the saturation function, which will allow us to state that this type of nonlinearity belongs to the space $L^1(0,T;L^2(0,L))$.
\begin{lemma}[\cite{slemrod1989mcss}, Theorem 5.1.]
\label{lipschitz-satl2}
For all $(s,\tilde{s})\in L^2(0,L)^2$, we have
\begin{equation}
\Vert \sat(s)-\sat(\tilde{s})\Vert_{L^2(0,L)}\leq 3\Vert s-\tilde{s}\Vert_{L^2(0,L)}.
\end{equation}
\end{lemma}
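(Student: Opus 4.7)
The plan is to prove the inequality by distinguishing cases according to which side of the threshold $u_s$ the norms $\|s\|_{L^2(0,L)}$ and $\|\tilde s\|_{L^2(0,L)}$ lie on; three cases (up to the symmetric role of $s$ and $\tilde s$) naturally appear from the piecewise definition of $\sat$.

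First, if $\|s\|_{L^2(0,L)}\le u_s$ and $\|\tilde s\|_{L^2(0,L)}\le u_s$, then $\sat(s)=s$ and $\sat(\tilde s)=\tilde s$, so $\|\sat(s)-\sat(\tilde s)\|_{L^2(0,L)}=\|s-\tilde s\|_{L^2(0,L)}$, which is trivially bounded by $3\|s-\tilde s\|_{L^2(0,L)}$.

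Second, if both $\|s\|_{L^2(0,L)}\ge u_s$ and $\|\tilde s\|_{L^2(0,L)}\ge u_s$, I would perform the standard algebraic manipulation
\begin{equation*}
\frac{s}{\|s\|_{L^2(0,L)}}-\frac{\tilde s}{\|\tilde s\|_{L^2(0,L)}}=\frac{(s-\tilde s)\|\tilde s\|_{L^2(0,L)}+\tilde s\bigl(\|\tilde s\|_{L^2(0,L)}-\|s\|_{L^2(0,L)}\bigr)}{\|s\|_{L^2(0,L)}\|\tilde s\|_{L^2(0,L)}},
\end{equation*}
multiply by $u_s$, take the $L^2$-norm, apply the reverse triangle inequality $|\,\|\tilde s\|-\|s\|\,|\le \|s-\tilde s\|$, and use $u_s/\|s\|_{L^2(0,L)}\le 1$ to absorb the prefactor. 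This yields a bound by $2\|s-\tilde s\|_{L^2(0,L)}$.

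Third, in the mixed case $\|s\|_{L^2(0,L)}\le u_s\le \|\tilde s\|_{L^2(0,L)}$ (the symmetric subcase is identical), I would write
\begin{equation*}
\sat(s)-\sat(\tilde s)=(s-\tilde s)+\tilde s\Bigl(1-\frac{u_s}{\|\tilde s\|_{L^2(0,L)}}\Bigr),
\end{equation*}
so that, after taking norms, the second term equals $\|\tilde s\|_{L^2(0,L)}-u_s\le \|\tilde s\|_{L^2(0,L)}-\|s\|_{L^2(0,L)}\le \|s-\tilde s\|_{L^2(0,L)}$, giving again a bound by $2\|s-\tilde s\|_{L^2(0,L)}$. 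Gathering the three cases proves the inequality with constant at most $3$ (in fact $2$, which is stronger than what the statement asks).

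The only mildly delicate step is the algebraic manipulation in the second case: the naive bound $\|s/\|s\|-\tilde s/\|\tilde s\|\|\le 2\|s-\tilde s\|/\min(\|s\|,\|\tilde s\|)$ would degenerate if we did not exploit the fact that $u_s\le \|s\|_{L^2(0,L)}$ to absorb the $1/\|s\|_{L^2(0,L)}$ factor. No deep functional-analytic machinery is needed; the argument is a purely elementary case analysis in the Hilbert space $L^2(0,L)$, and it extends verbatim to any Hilbert space-valued saturation, which is exactly the abstract setting of \cite{slemrod1989mcss}.
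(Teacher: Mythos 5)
Your proof is correct. Note that the paper does not actually prove this lemma: it is imported verbatim as Theorem 5.1 of \cite{slemrod1989mcss}, so there is no in-paper argument to compare against. Your elementary three-case analysis is exactly the standard way to establish global Lipschitz continuity of this radial retraction-type saturation, and each step checks out: the algebraic identity in the second case is right, the reverse triangle inequality is applied correctly, and the cancellation of the $1/\Vert s\Vert_{L^2(0,L)}$ factor by the prefactor $u_s$ is legitimate precisely because $u_s\leq\Vert s\Vert_{L^2(0,L)}$ there. In fact your argument yields the sharper constant $2$, which of course implies the stated bound with constant $3$; the constant $3$ in \cite{slemrod1989mcss} comes from a slightly cruder estimate. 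Your closing remark is also accurate: nothing in the argument uses more than the norm and the triangle inequality, so it holds for the analogous saturation in any Hilbert (indeed any normed) space, which is the abstract setting of the cited reference.
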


Let us now state the following properties on the nonhomogeneous linearized KdV equation \eqref{kdv-2}.

Firstly, we have this proposition borrowed from \cite[Proposition 4.1]{rosier1997kdv}
\begin{proposition}[\cite{rosier1997kdv}]
\label{proposition-reg-rosier}
If $y\in L^2(0,T;H^1(0,L))$, then $yy_x\in L^1(0,T;L^2(0,L))$ and the map $\psi_1: y\in L^2(0,T;H^1(0,L))\mapsto yy_x \in L^1(0,T;L^2(0,L))$ is continuous.
\end{proposition}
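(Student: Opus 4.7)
The plan is to first prove the integrability statement by a pointwise-in-time estimate combined with the one-dimensional Sobolev embedding $H^1(0,L)\hookrightarrow L^\infty(0,L)$, and then derive continuity of $\psi_1$ from the same ingredients applied to the bilinear difference.

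For the first part, I would fix $t\in[0,T]$ and estimate the $L^2$-norm of the product by Hölder:
\begin{equation*}
\|y(t,\cdot)y_x(t,\cdot)\|_{L^2(0,L)} \leq \|y(t,\cdot)\|_{L^\infty(0,L)}\|y_x(t,\cdot)\|_{L^2(0,L)}.
\end{equation*}
Because the spatial domain is one-dimensional, $H^1(0,L)$ embeds continuously into $L^\infty(0,L)$, so there exists $C>0$ such that $\|y(t,\cdot)\|_{L^\infty(0,L)}\leq C\|y(t,\cdot)\|_{H^1(0,L)}$. Combining this with $\|y_x(t,\cdot)\|_{L^2(0,L)}\leq\|y(t,\cdot)\|_{H^1(0,L)}$ gives
\begin{equation*}
\|y(t,\cdot)y_x(t,\cdot)\|_{L^2(0,L)}\leq C\|y(t,\cdot)\|_{H^1(0,L)}^2.
\end{equation*}
Integrating over $[0,T]$ yields $\|yy_x\|_{L^1(0,T;L^2(0,L))}\leq C\|y\|_{L^2(0,T;H^1(0,L))}^2<\infty$, which proves the first claim.

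For continuity of $\psi_1$, given $y,\tilde y\in L^2(0,T;H^1(0,L))$, I would use the algebraic identity
\begin{equation*}
yy_x-\tilde y\tilde y_x=(y-\tilde y)y_x+\tilde y(y-\tilde y)_x
\end{equation*}
and estimate each term exactly as above:
\begin{equation*}
\|yy_x-\tilde y\tilde y_x\|_{L^2(0,L)}\leq C\|y-\tilde y\|_{H^1(0,L)}\bigl(\|y\|_{H^1(0,L)}+\|\tilde y\|_{H^1(0,L)}\bigr).
\end{equation*}
Then integrating in time and applying Cauchy--Schwarz gives
\begin{equation*}
\|\psi_1(y)-\psi_1(\tilde y)\|_{L^1(0,T;L^2(0,L))}\leq C\|y-\tilde y\|_{L^2(0,T;H^1(0,L))}\bigl(\|y\|_{L^2(0,T;H^1(0,L))}+\|\tilde y\|_{L^2(0,T;H^1(0,L))}\bigr),
\end{equation*}
which establishes local Lipschitz (hence continuous) dependence.

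There is no serious obstacle: everything hinges on the Sobolev embedding $H^1(0,L)\hookrightarrow L^\infty(0,L)$, which is specific to the one-dimensional setting and is exactly the reason why the quadratic nonlinearity $yy_x$ is tractable in the energy space associated with the KdV equation. The only point to be slightly careful about is the Cauchy--Schwarz step, which upgrades the $L^1$-in-time integrability of the $H^1$-norm squared into a bilinear estimate compatible with Lipschitz continuity on bounded subsets of $L^2(0,T;H^1(0,L))$.
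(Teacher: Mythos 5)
Your proof is correct. The paper itself does not prove this proposition---it is cited directly from \cite[Proposition 4.1]{rosier1997kdv}---but your argument (H\"older's inequality combined with the one-dimensional embedding $H^1(0,L)\hookrightarrow L^\infty(0,L)$ for integrability, then the bilinear splitting $yy_x-\tilde y\tilde y_x=(y-\tilde y)y_x+\tilde y(y-\tilde y)_x$ with Cauchy--Schwarz in time for continuity) is exactly the standard proof given in that reference, so there is nothing to correct.
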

Secondly, we have the following proposition
\begin{proposition}
\label{proposition-reg}
If $y\in L^2(0,T;H^1(0,L))$, then  $\sat(ay)\in L^1(0,T;L^2(0,L))$ and the map $\psi_2: y\in L^2(0,T;H^1(0,L)) \mapsto \sat(ay)\in L^1(0,T;L^2(0,L))$ is continuous.
\end{proposition}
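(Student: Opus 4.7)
The plan is to verify integrability first, then derive continuity directly from the Lipschitz estimate of Lemma \ref{lipschitz-satl2}. The key observation is that the saturation map is a bounded, globally Lipschitz nonlinearity on $L^2(0,L)$, so both properties become essentially elementary once one unpacks the definitions.

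First, I would check that for $y \in L^2(0,T;H^1(0,L))$, the function $t \mapsto \sat(ay(t,\cdot))$ is a well-defined element of $L^1(0,T;L^2(0,L))$. Measurability follows from the (strong) measurability of $t \mapsto y(t,\cdot)$ as an $L^2(0,L)$-valued map, combined with the Lipschitz continuity of $\sat$ given by Lemma \ref{lipschitz-satl2} (which makes $\sat : L^2(0,L) \to L^2(0,L)$ continuous, hence preserves measurability). For the norm estimate, I would use the pointwise (in $t$) a priori bound
\begin{equation*}
\Vert \sat(ay(t,\cdot))\Vert_{L^2(0,L)} \leq u_s,
\end{equation*}
which is immediate from the definition \eqref{function-saturation} (in both cases the $L^2$-norm of the output is at most $u_s$). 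Integrating in $t$ gives $\Vert \sat(ay)\Vert_{L^1(0,T;L^2(0,L))} \leq T u_s$, so $\sat(ay) \in L^1(0,T;L^2(0,L))$.

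Next, for continuity of $\psi_2$, take $y,\tilde y \in L^2(0,T;H^1(0,L))$. Applying Lemma \ref{lipschitz-satl2} pointwise in $t$ to $s = ay(t,\cdot)$ and $\tilde s = a\tilde y(t,\cdot)$ yields
\begin{equation*}
\Vert \sat(ay(t,\cdot)) - \sat(a\tilde y(t,\cdot))\Vert_{L^2(0,L)} \leq 3a \,\Vert y(t,\cdot)-\tilde y(t,\cdot)\Vert_{L^2(0,L)}.
\end{equation*}
Integrating in $t$ and applying the Cauchy--Schwarz inequality on $[0,T]$, together with the continuous embedding $H^1(0,L) \hookrightarrow L^2(0,L)$, gives
\begin{equation*}
\Vert \psi_2(y) - \psi_2(\tilde y)\Vert_{L^1(0,T;L^2(0,L))} \leq 3a\sqrt{T}\, \Vert y-\tilde y\Vert_{L^2(0,T;L^2(0,L))} \leq C\,\Vert y-\tilde y\Vert_{L^2(0,T;H^1(0,L))},
\end{equation*}
for a constant $C$ depending on $a$, $T$ and the embedding constant. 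This shows that $\psi_2$ is in fact globally Lipschitz, hence continuous.

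I do not anticipate any genuine obstacle: the only subtle point is the measurability in the Bochner sense, and this is handled automatically by Lemma \ref{lipschitz-satl2}. The whole argument is short because Slemrod's Lipschitz bound on the $L^2$-saturation operator does all the heavy lifting; without it one would have to argue more carefully about the two branches of the definition \eqref{function-saturation}.
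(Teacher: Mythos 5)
Your proof is correct and follows essentially the same route as the paper: Lemma \ref{lipschitz-satl2} applied pointwise in $t$, followed by Cauchy--Schwarz in time and the embedding $H^1(0,L)\hookrightarrow L^2(0,L)$, yields the global Lipschitz bound from which continuity is immediate. The only (harmless) variation is in the integrability step, where you use the uniform bound $\Vert\sat(s)\Vert_{L^2(0,L)}\leq u_s$ from the definition \eqref{function-saturation}, whereas the paper simply sets $z=0$ in the Lipschitz estimate; both work, and your added remark on Bochner measurability is a point the paper leaves implicit.
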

\begin{proof}
Let $y,z\in L^2(0,T;H^1(0,L))$. We have, using Lemma \ref{lipschitz-satl2} and H\"older inequality
\begin{eqnarray}
\label{regularity-sat-l1}
\Vert \sat(ay) - &&\sat(az)\Vert_{L^1(0,T;L^2(0,L))}, \nonumber \\
&& \leq 3\int_0^T a\Vert (y-z)\Vert_{L^2(0,L)},\nonumber \\
&&\leq 3 L a\sqrt{T}\Vert (y-z)\Vert_{L^2(0,T;H^1(0,L))}.
\end{eqnarray}
Plugging $z=0$ in (\ref{regularity-sat-l1}) yields $\sat(ay)\in L^1(0,T;L^2(0,L))$ and using (\ref{regularity-sat-l1}) gives the continuity of the map $\psi_2$. It concludes the proof of Proposition \ref{proposition-reg}.\end{proof}

Finally, we have this result borrowed from \cite[Proposition 4.1]{rosier1997kdv}
\begin{proposition}[\cite{rosier1997kdv}]
\label{rosier-fixed}
For $g\in L^1(0,T;L^2(0,L))$ and $y_0\in L^2(0,L)$, the mild solution of (\ref{kdv-2}) belongs to $\mathcal{B}(T)$. Moreover the map $\psi_3: g\in L^1(0,T;H^1(0,L))\mapsto y\in \mathcal{B}(T)$ is continuous. 

\end{proposition}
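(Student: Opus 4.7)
The plan is to rely on Duhamel's formula together with the Kato-type smoothing effect already encoded in Theorem \ref{lkdv-wp} for the homogeneous equation. Since $A$ generates the strongly continuous contraction semigroup $W(t)$, the mild solution of \eqref{kdv-2} is given by
\begin{equation*}
y(t)=W(t)y_0+\int_0^t W(t-s)g(s)\,ds.
\end{equation*}
For the first term, Theorem \ref{lkdv-wp} directly yields $W(\cdot)y_0\in\mathcal{B}(T)$ together with the bound $\|W(\cdot)y_0\|_{\mathcal{B}(T)}\le C_1\|y_0\|_{L^2(0,L)}$. All the work therefore concentrates on the Duhamel integral, which I will denote by $z(t):=\int_0^t W(t-s)g(s)\,ds$.

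The $C(0,T;L^2(0,L))$ estimate for $z$ is immediate from the contraction property: $\|z(t)\|_{L^2(0,L)}\le\int_0^t\|g(s)\|_{L^2(0,L)}\,ds\le\|g\|_{L^1(0,T;L^2(0,L))}$. The subtler point, and what I expect to be the main obstacle, is obtaining the $L^2(0,T;H^1(0,L))$ regularity from a source that is only $L^1$ in time. The natural route is to first assume $g\in C^1([0,T];L^2(0,L))$ and $y_0\in D(A)$ so that $z$ is a classical solution, and then perform two standard energy identities. Multiplying the equation by $y$ and integrating over $[0,L]$, after integration by parts and use of the boundary conditions, produces
\begin{equation*}
\tfrac12\tfrac{d}{dt}\|y(t,\cdot)\|_{L^2(0,L)}^2+\tfrac12|y_x(t,0)|^2=\int_0^L g(t,x)y(t,x)\,dx,
\end{equation*}
which recovers the $C(0,T;L^2)$ bound. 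Next, multiplying by $x y$ (the Kato multiplier) and integrating by parts gives an identity in which the $H^1$-norm of $y$ on the left-hand side is controlled by $\|y_0\|_{L^2(0,L)}^2$, $\|g\|_{L^1(0,T;L^2(0,L))}\|y\|_{C(0,T;L^2(0,L))}$ and boundary terms that vanish thanks to $y(t,0)=y(t,L)=y_x(t,L)=0$. Combining the two estimates yields
\begin{equation*}
\|y\|_{\mathcal{B}(T)}\le C\bigl(\|y_0\|_{L^2(0,L)}+\|g\|_{L^1(0,T;L^2(0,L))}\bigr),
\end{equation*}
with a constant $C=C(T,L)>0$ independent of $g$ and $y_0$.

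Finally, a density argument extends this estimate to arbitrary $g\in L^1(0,T;L^2(0,L))$ and $y_0\in L^2(0,L)$: approximate by smooth data, use the inequality above to show that the corresponding mild solutions form a Cauchy sequence in $\mathcal{B}(T)$, and identify the limit with the mild solution $y$. Since the map $(y_0,g)\mapsto y$ is linear, the estimate just derived is exactly the continuity of $\psi_3$ (at fixed $y_0$, which here is taken as part of the data of the problem). The only delicate point throughout the argument is to justify the Kato-multiplier computation at the level of mild solutions, which is handled cleanly by the approximation procedure described above.
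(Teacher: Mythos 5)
Your proof is correct, and since the paper does not prove this proposition at all (it is quoted from Rosier's Proposition 4.1 without argument), the right comparison is with the cited source: your route --- Duhamel's formula, the contraction property for the $C(0,T;L^2(0,L))$ bound, the Kato multiplier $xy$ for the $L^2(0,T;H^1(0,L))$ bound, and a density/approximation argument to justify the integrations by parts for mild solutions --- is precisely the standard proof given there. The resulting estimate $\Vert y\Vert_{\mathcal{B}(T)}\leq C\left(\Vert y_0\Vert_{L^2(0,L)}+\Vert g\Vert_{L^1(0,T;L^2(0,L))}\right)$, combined with linearity in $g$, indeed gives the continuity of $\psi_3$ (note in passing that the statement's ``$g\in L^1(0,T;H^1(0,L))$'' is a typo for $L^1(0,T;L^2(0,L))$).
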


\subsubsection{Proof of Theorem \ref{nl-theorem-wp}}

We are now in position to prove Theorem \ref{nl-theorem-wp}. Let us begin this section with a technical lemma.
\begin{lemma}[\cite[Lemma 18]{chapouly2009global}]
\label{zhang-regularity}
For any $T>0$ and $y,z\in\mathcal{B}(T)$,
\begin{equation}
\int_0^T \Vert (y(t,.)z(t,.))_x\Vert_{L^2(0,L)}dt\leq 2 \sqrt{T}\Vert y\Vert_{\mathcal{B}(T)}\Vert z\Vert_{\mathcal{B}(T)}
\end{equation}
\end{lemma}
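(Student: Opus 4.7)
My plan is to combine the Leibniz rule with a one-dimensional Sobolev embedding in the space variable, and then exploit the structure of the $\mathcal{B}(T)$-norm via Cauchy--Schwarz in~$t$. I start pointwise in $t$ by expanding $(yz)_x = y_x z + y z_x$ and applying Minkowski followed by Hölder in $L^2(0,L)$:
\[
\|(y(t,\cdot)z(t,\cdot))_x\|_{L^2(0,L)} \leq \|z(t,\cdot)\|_{L^\infty(0,L)}\|y_x(t,\cdot)\|_{L^2(0,L)} + \|y(t,\cdot)\|_{L^\infty(0,L)}\|z_x(t,\cdot)\|_{L^2(0,L)}.
\]
The two contributions are symmetric, so it suffices to treat one of them.

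Next I control the $L^\infty_x$ factors by a one-dimensional Gagliardo--Nirenberg / Agmon-type inequality of the form $\|w\|_{L^\infty(0,L)} \leq C\bigl(\|w\|_{L^2(0,L)}^{1/2}\|w_x\|_{L^2(0,L)}^{1/2} + \|w\|_{L^2(0,L)}\bigr)$, which holds because $H^1(0,L)$ embeds continuously in $L^\infty(0,L)$. The key is to keep the pure $L^2_x$ piece visible, since it is exactly the factor that can be frozen in time as $\sup_{t\in[0,T]}\|w(t,\cdot)\|_{L^2(0,L)} \leq \|w\|_{\mathcal{B}(T)}$. Substituting this bound into the previous inequality and integrating over $[0,T]$, the factor $\|y(t,\cdot)\|_{L^2(0,L)}$ (or its square root) is pulled out of the time integral at its supremum, while the remaining integrand is a product of $L^2_x$ norms of derivatives. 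A final Cauchy--Schwarz in~$t$, together with the trivial bound $\int_0^T 1\cdot f(t)\,dt \leq \sqrt{T}\,\|f\|_{L^2(0,T)}$, produces the $\sqrt{T}$ prefactor and regroups the remaining $L^2_t H^1_x$ integrals into $\|y\|_{\mathcal{B}(T)}\|z\|_{\mathcal{B}(T)}$; the factor~$2$ reflects the two symmetric Leibniz terms.

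\emph{Main obstacle.} The only real subtlety is the bookkeeping: choosing the correct split between the two summands in the Gagliardo--Nirenberg/Agmon bound and the correct order of Cauchy--Schwarz in~$t$, so that exactly one factor of $\|y\|_{\mathcal{B}(T)}$ and one factor of $\|z\|_{\mathcal{B}(T)}$ emerge, multiplied by a single power $\sqrt{T}$ coming from pairing the bounded-in-$t$ piece against $1$ over the interval $[0,T]$. Beyond standard 1D Sobolev embeddings and Hölder/Cauchy--Schwarz manipulations, nothing deeper is required.
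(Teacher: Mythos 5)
The paper does not prove this lemma at all: it is quoted verbatim from \cite[Lemma 18]{chapouly2009global}, so there is no in-paper argument to compare yours against. Your overall strategy (Leibniz rule, H\"older in $x$, a one-dimensional Agmon/Gagliardo--Nirenberg bound for the $L^\infty_x$ factors, then H\"older in $t$) is indeed the standard route to estimates of this type, and it is essentially how the classical versions of this bound (Rosier's Proposition 4.1, Coron--Cr\'epeau) are obtained.

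However, the bookkeeping you defer to at the end is precisely where the plan breaks down, and it does not deliver the stated inequality. After inserting $\|z(t,\cdot)\|_{L^\infty}\leq C\bigl(\|z\|_{L^2}^{1/2}\|z_x\|_{L^2}^{1/2}+\|z\|_{L^2}\bigr)$ into $\int_0^T\|z\|_{L^\infty}\|y_x\|_{L^2}\,dt$, the term coming from the pure $L^2_x$ piece does behave as you say: freezing $\sup_t\|z\|_{L^2}$ and pairing $\|y_x\|_{L^2}$ against $1$ over $[0,T]$ gives a clean $\sqrt{T}$. But the cross term leaves the integrand $\sup_t\|z\|_{L^2}^{1/2}\cdot\|z_x(t,\cdot)\|_{L^2}^{1/2}\|y_x(t,\cdot)\|_{L^2}$, and $t\mapsto\|z_x\|_{L^2}^{1/2}\|y_x\|_{L^2}$ lies only in $L^{4/3}(0,T)$ (since $\|z_x\|_{L^2}^{1/2}\in L^4_t$ and $\|y_x\|_{L^2}\in L^2_t$), not in $L^2(0,T)$; the ``final Cauchy--Schwarz against $1$'' is therefore not available, and H\"older with exponents $(4,2,4)$ yields a factor $T^{1/4}$, not $T^{1/2}$. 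Moreover the Agmon constant $C$ depends on $L$, so the clean factor $2$ cannot come only from counting the two Leibniz terms. What your argument actually proves is an estimate of the form $C(L)\,(T^{1/2}+T^{1/4})\,\|y\|_{\mathcal{B}(T)}\|z\|_{\mathcal{B}(T)}$ --- which is the shape of the classical Rosier/Coron--Cr\'epeau bound and is entirely sufficient for the fixed-point argument in Lemma \ref{local-wp} --- but it is not the inequality $2\sqrt{T}\|y\|_{\mathcal{B}(T)}\|z\|_{\mathcal{B}(T)}$ asserted in the statement. (In fact, testing the stated bound with $y=z$ equal to a time-independent bump of width $\delta$ and $T=\delta^2$ suggests the pure $\sqrt{T}$ form cannot hold with a universal constant, so you should not expect any rearrangement of your steps to produce it; if you need the lemma exactly as written, you must go back to the precise statement and norm conventions in \cite{chapouly2009global}.)
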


Let us now state our local well-posedness result.
\begin{lemma}[\em Local well-posedness of (\ref{nlkdv_sat})\em]
\label{local-wp}
Let $T>0$, be given. For any $y_0\in L^2(0,L)$, there exists $T^\prime \in [0,T]$ depending on $y_0$ such that \eqref{nlkdv_sat} admits a unique solution $y\in\mathcal{B}(T^\prime)$. 
\end{lemma}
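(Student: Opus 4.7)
The plan is to apply the Banach fixed-point theorem on a closed ball of $\mathcal{B}(T')$ for $T'$ sufficiently small. Given $y_0\in L^2(0,L)$ and a radius $R>0$ to be fixed later (depending on $\|y_0\|_{L^2(0,L)}$), set
\begin{equation*}
B_R(T'):=\{y\in\mathcal{B}(T'):\Vert y\Vert_{\mathcal{B}(T')}\leq R\}
\end{equation*}
and define the map $\Phi\colon B_R(T')\to \mathcal{B}(T')$ by $\Phi(y)=z$, where $z$ is the mild solution of the inhomogeneous linear equation \eqref{kdv-2} with the same initial datum $y_0$ and right-hand side $g=-yy_x-\sat(ay)$. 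Propositions \ref{proposition-reg-rosier} and \ref{proposition-reg} guarantee that $g\in L^1(0,T';L^2(0,L))$, so Proposition \ref{rosier-fixed} ensures that $\Phi$ is well defined and $z\in\mathcal{B}(T')$. A solution of \eqref{nlkdv_sat} in $\mathcal{B}(T')$ is precisely a fixed point of $\Phi$.

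Next I would show stability of the ball. Writing $\|\Phi(y)\|_{\mathcal{B}(T')}\leq C_1\|y_0\|_{L^2(0,L)}+C\|yy_x\|_{L^1(0,T';L^2)}+C\|\sat(ay)\|_{L^1(0,T';L^2)}$, I would use Lemma \ref{zhang-regularity} (with $z=y$) to bound $\|yy_x\|_{L^1(0,T';L^2)}\leq\sqrt{T'}\,\|y\|_{\mathcal{B}(T')}^2$, and the estimate displayed in the proof of Proposition \ref{proposition-reg} to bound $\|\sat(ay)\|_{L^1(0,T';L^2)}\leq 3aL\sqrt{T'}\,\|y\|_{\mathcal{B}(T')}$. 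Choosing $R:=2C_1\|y_0\|_{L^2(0,L)}$ and then $T'$ small enough (depending only on $R$, $a$, $L$) makes $\|\Phi(y)\|_{\mathcal{B}(T')}\leq R$ for every $y\in B_R(T')$.

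For the contraction, given $y,\tilde y\in B_R(T')$, the difference $\Phi(y)-\Phi(\tilde y)$ is the mild solution of \eqref{kdv-2} with vanishing initial datum and source $-(yy_x-\tilde y\tilde y_x)-(\sat(ay)-\sat(a\tilde y))$. The key trick is to rewrite $yy_x-\tilde y\tilde y_x=\tfrac{1}{2}\bigl((y+\tilde y)(y-\tilde y)\bigr)_x$, so that Lemma \ref{zhang-regularity} yields
\begin{equation*}
\|yy_x-\tilde y\tilde y_x\|_{L^1(0,T';L^2)}\leq \sqrt{T'}\,\|y+\tilde y\|_{\mathcal{B}(T')}\|y-\tilde y\|_{\mathcal{B}(T')}\leq 2R\sqrt{T'}\,\|y-\tilde y\|_{\mathcal{B}(T')},
\end{equation*}
while Lemma \ref{lipschitz-satl2} (together with Hölder as in Proposition \ref{proposition-reg}) gives $\|\sat(ay)-\sat(a\tilde y)\|_{L^1(0,T';L^2)}\leq 3aL\sqrt{T'}\,\|y-\tilde y\|_{\mathcal{B}(T')}$. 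Proposition \ref{rosier-fixed} then delivers $\|\Phi(y)-\Phi(\tilde y)\|_{\mathcal{B}(T')}\leq C\sqrt{T'}(2R+3aL)\,\|y-\tilde y\|_{\mathcal{B}(T')}$. Shrinking $T'\in[0,T]$ so that $C\sqrt{T'}(2R+3aL)<1$ makes $\Phi$ a strict contraction on the complete metric space $B_R(T')$, and the Banach fixed-point theorem yields a unique fixed point, which is the sought mild solution.

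The main obstacle is really the nonlinear transport term $yy_x$: unlike $\sat(ay)$, it is not globally Lipschitz from $L^2(0,T';H^1(0,L))$ into $L^1(0,T';L^2(0,L))$, which is why $R$ must be chosen a priori in terms of $\|y_0\|_{L^2(0,L)}$ and why $T'$ depends on $y_0$. The divergence-form identity $yy_x-\tilde y\tilde y_x=\tfrac{1}{2}((y+\tilde y)(y-\tilde y))_x$, combined with Lemma \ref{zhang-regularity}, is precisely the tool that turns this nonlinearity into a locally Lipschitz one with a $\sqrt{T'}$ factor, which is what makes the contraction argument close.
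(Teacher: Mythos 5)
Your proposal is correct and follows essentially the same route as the paper: the same fixed-point map built from the mild solution of \eqref{kdv-2} with source $-yy_x-\sat(ay)$, the same ball of radius $2C_1\Vert y_0\Vert_{L^2(0,L)}$, and the same $\sqrt{T'}$ smallness argument via Lemma \ref{zhang-regularity}, Proposition \ref{proposition-reg} and Proposition \ref{rosier-fixed}. The only difference is that you spell out the contraction step (via the identity $yy_x-\tilde y\tilde y_x=\tfrac{1}{2}((y+\tilde y)(y-\tilde y))_x$), which the paper leaves implicit.
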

\begin{proof}
We follow the strategy of \cite{chapouly2009global} and \cite{rosier2006global}. From Propositions \ref{proposition-reg-rosier}, \ref{proposition-reg} and \ref{rosier-fixed}, we know that, for all $z\in L^2(0,T;H^1(0,L))$, there exists a unique solution $y$ in $\mathcal{B}(T)$ to the following equation
\begin{equation}
\label{kdv-fixed}
\left\{
\begin{split}
&y_t+y_x+y_{xxx}=-zz_x-\sat(az),\\
&y(t,0)=y(t,L)=0,\\
&y_x(t,L)=0,\\
&y(0,x)=y_0(x).
\end{split}
\right.
\end{equation}
Solution $y$ to \eqref{kdv-fixed} can be written in its integral form
\begin{equation}
\begin{split}
y(t)=&W(t)y_0-\int_0^t W(t-\tau)(zz_x)(\tau)d\tau\\
&-\int_0^t W(t-\tau)\sat(az(\tau,.))d\tau
\end{split}
\end{equation}
For given $y_0\in L^2(0,L)$, let $r$ and $T^\prime$ be positive values to be chosen. Let us consider the following set
\begin{equation}
S_{T^\prime ,r}=\lbrace z\in \mathcal{B}(T^\prime),\: \Vert z\Vert_{\mathcal{B}(T^\prime)}\leq r\rbrace,
\end{equation}
which is a closed, convex and bounded subset of $\mathcal{B}(T^\prime)$. Consequently, $S_{T^\prime , r}$ is a complete metric space in the topology induced from $\mathcal{B}(T)$. We define the map $\Gamma$ on $S_{T^\prime,r}$ by, for all $z\in S_{T^\prime,r}$
\begin{equation}
\begin{split}
\Gamma(z):=&W(t)y_0-\int_0^t W(t-\tau)(zz_x)(\tau)d\tau\\
&-\int_0^t W(t-\tau)\sat(az(\tau,.))d\tau,\: \forall z\in S_{T^\prime,r}.
\end{split}
\end{equation}
We aim at proving the existence of a fixed point for this operator.

It follows immediatly from \eqref{regularity-sat-l1}, Lemma \ref{zhang-regularity} and the linear estimates written in Theorem \ref{lkdv-wp} that, for every $z\in S_{T^\prime,r}$, there exist positive values $C_2$ and $C_3$ such that
\begin{equation}
\begin{split}
\Vert \Gamma(z)\Vert_{\mathcal{B}(T^\prime)}\leq &C_1 \Vert y_0\Vert_{L^2(0,L)}+C_2 \sqrt{T^{\prime}}\Vert z\Vert_{\mathcal{B}(T^\prime)}^2\\
&+C_3\sqrt{T^{\prime}}\Vert z\Vert_{\mathcal{B}(T^\prime)}
\end{split}
\end{equation}
We choose $r>0$ and $T^\prime>0$ such that 
\begin{equation}
\left\{
\begin{split}
&r=2C_1\Vert y_0\Vert_{L^2(0,L)}\\
&C_2\sqrt{T^\prime}r + C_3\sqrt{T^\prime}\leq \frac{1}{2}
\end{split}
\right.
\end{equation}
in order to obtain
\begin{equation}
\Vert \Gamma(z)\Vert_{\mathcal{B}(T^\prime)}\leq r,\quad \forall z\in S_{T^\prime,r}.
\end{equation}
Thus, with such $r$ and $T^\prime$, $\Gamma$ maps $S_{T^\prime ,r}$ to $S_{T^\prime ,r}$. Moreover, one can prove with the same inequalities that
\begin{equation}
\Vert \Gamma(z_1)-\Gamma(z_2)\Vert_{\mathcal{B}(T^\prime)}\leq \frac{1}{2}\Vert z_1-z_2\Vert_{\mathcal{B}(T^\prime)},\: \forall (z_1,z_2)\in S^2_{T^\prime ,r}
\end{equation}
The existence of the solutions to the Cauchy problem \eqref{nlkdv_sat} follows by using the Banach fixed point theorem \cite[Theorem 5.7]{brezis2010functional}.\end{proof}

We need the following Lemma inspired by \cite{coroncrepeau2004missed} and \cite{chapouly2009global} which implies that if there exists a solution for all $T>0$ then the solution is unique.
\begin{lemma}
\label{uniqueness-coron-crepeau}
For any $T>0$ and $a>0$, there exists $C_4(T,L)$ such that for every $y_0,z_0\in L^2(0,L)$ for which there exist mild solutions $y$ and $z$ to
\begin{equation}
\left\{
\begin{split}
&y_t+y_x+y_{xxx}+yy_x+\sat(ay)=0,\\
&y(t,0)=y(t,L)=0,\\
&y_x(t,L)=0,\\
&y(0,x)=y_0(x),
\end{split}
\right.
\end{equation}
and
\begin{equation}
\left\{
\begin{split}
&z_t+z_x+z_{xxx}+zz_x+\sat(az)=0,\\
&z(t,0)=z(t,L)=0,\\
&z_x(t,L)=0,\\
&z(0,x)=z_0(x),
\end{split}
\right.
\end{equation}
one has the following inequalities
\begin{equation}
\begin{split}
&\int_0^T \int_0^L (z_x(t,x)-y_x(t,x))^2dxdt\\
& \leq e^{C_4(1+\Vert y\Vert_{L^2(0,T;H^1(0,L))}+\Vert z\Vert_{L^2(0,T;H^1(0,L))})}\int_0^L (z_0-y_0)^2dx,
\end{split}
\end{equation} 
\begin{equation}
\begin{split}
&\int_0^T \int_0^L (z(t,x)-y(t,x))^2dxdt\\
&\leq e^{C_4(1+\Vert y\Vert_{L^2(0,T;H^1(0,L))}+\Vert z\Vert_{L^2(0,T;H^1(0,L))})}\int_0^L (z_0-y_0)^2dx.
\end{split}
\end{equation}
\end{lemma}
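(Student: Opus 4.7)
The plan is to derive two energy identities for the difference $w := z - y$ and close the argument with Gronwall's lemma. Subtracting the two equations and using the algebraic identity $zz_x - yy_x = \tfrac{1}{2}(w(y+z))_x$, the function $w$ satisfies the linear-type KdV equation
\begin{equation*}
w_t + w_x + w_{xxx} + \tfrac{1}{2}\bigl(w(y+z)\bigr)_x + \sat(az) - \sat(ay) = 0,
\end{equation*}
with homogeneous boundary conditions $w(t,0) = w(t,L) = w_x(t,L) = 0$ and initial datum $w(0,\cdot) = z_0 - y_0$.

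First I would multiply by $w$, integrate over $[0,L]$, and use the boundary conditions. The transport term vanishes, the dispersive term produces the hidden boundary dissipation $\tfrac{1}{2}|w_x(t,0)|^2$, and two integrations by parts on the quadratic term yield
\begin{equation*}
\tfrac{1}{2}\tfrac{d}{dt}\|w\|_{L^2}^2 + \tfrac{1}{2}|w_x(t,0)|^2 = -\tfrac{1}{4}\int_0^L w^2(y+z)_x\,dx - \int_0^L w\bigl(\sat(az)-\sat(ay)\bigr)\,dx.
\end{equation*}
Lemma \ref{lipschitz-satl2} together with Cauchy--Schwarz bounds the saturation contribution by $3a\|w\|_{L^2}^2$. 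For the nonlinear one, the Sobolev embedding $\|w\|_{L^\infty(0,L)}\leq\sqrt{L}\|w_x\|_{L^2(0,L)}$ (valid because $w(0)=0$) together with Cauchy--Schwarz gives $\bigl|\int w^2(y+z)_x\bigr|\leq \sqrt{L}\|w\|_{L^2}\|w_x\|_{L^2}\|(y+z)_x\|_{L^2}$, and Young's inequality splits this into $\varepsilon\|w_x\|_{L^2}^2 + C_\varepsilon\|w\|_{L^2}^2\|(y+z)_x\|_{L^2}^2$.

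Since $\varepsilon\|w_x\|_{L^2}^2$ is not controlled on the left of the preceding identity, I would next multiply the equation for $w$ by $xw$ and integrate. Using $w(L)=w_x(L)=0$, the KdV part produces the Kato-type smoothing identity
\begin{equation*}
\tfrac{1}{2}\tfrac{d}{dt}\int_0^L xw^2\,dx + \tfrac{3}{2}\|w_x\|_{L^2}^2 = \tfrac{1}{2}\|w\|_{L^2}^2 - \int_0^L xw\Bigl[\tfrac{1}{2}\bigl(w(y+z)\bigr)_x + \sat(az)-\sat(ay)\Bigr]\,dx.
\end{equation*}
The right-hand side is controlled by exactly the same type of bounds as above (an extra factor $L$ appears because $x\leq L$), and by taking $\varepsilon$ small enough the $\|w_x\|_{L^2}^2$ terms get absorbed into the $\tfrac{3}{2}\|w_x\|_{L^2}^2$ on the left. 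Adding a suitable multiple of the two identities and integrating on $[0,t]$, Gronwall's lemma applied with the $L^1(0,T)$ weight $s\mapsto 1+\|(y+z)_x(s,\cdot)\|_{L^2}^2$ yields both stated bounds, the exponential factor coming from the integral of this weight, which is majorised by $1+\|y\|_{L^2(0,T;H^1)}^2+\|z\|_{L^2(0,T;H^1)}^2$ and in turn by the form of the exponent displayed in the lemma up to adjusting~$C_4$.

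The main difficulty will be the bookkeeping: choosing $\varepsilon$ and the relative weights of the two identities so that $\|w_x\|_{L^2}^2$ is absorbed without circularity, and carrying both estimates through simultaneously. The saturation term, by contrast, is dispatched in one line thanks to the global Lipschitz bound of Lemma \ref{lipschitz-satl2} and only contributes a harmless $e^{CaT}$ factor to the final Gronwall output.
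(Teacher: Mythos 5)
The paper does not actually prove this lemma: it defers to \cite{coroncrepeau2004missed} and \cite{chapouly2009global}, so there is no in-paper argument to compare against. Your scheme is precisely the standard one from those references, correctly adapted to the saturated equation: the decomposition $zz_x-yy_x=\tfrac12\bigl(w(y+z)\bigr)_x$, the two multipliers $w$ and $xw$ (the second supplying the Kato smoothing term $\tfrac32\Vert w_x\Vert_{L^2}^2$ needed to absorb the $\varepsilon\Vert w_x\Vert_{L^2}^2$ losses from the first), and Gronwall with an $L^1(0,T)$ weight. Both energy identities check out, and the new ingredient relative to the references --- the saturation difference --- is handled correctly and in the only reasonable way, via the global Lipschitz bound of Lemma \ref{lipschitz-satl2}, contributing $3a\Vert w\Vert_{L^2}^2$ (resp. $3aL\Vert w\Vert_{L^2}^2$) to the right-hand sides. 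As with the paper's other energy arguments, the computations are formal for mild solutions and would strictly require a regularization/density step, but that is consistent with the paper's own level of rigor.

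The one genuine gap is the last step, where you claim that your Gronwall exponent, whose time integral is of order $1+\Vert y\Vert_{L^2(0,T;H^1)}^2+\Vert z\Vert_{L^2(0,T;H^1)}^2$, is ``majorised \dots by the form of the exponent displayed in the lemma up to adjusting $C_4$.'' It is not: $1+s^2\le C_4(1+s)$ fails for large $s$, and no choice of $C_4(T,L)$ repairs it, so what you prove is a strictly weaker inequality with \emph{squared} norms in the exponent. The quadratic loss is traceable to the crude embedding $\Vert w\Vert_{L^\infty(0,L)}\le\sqrt{L}\Vert w_x\Vert_{L^2(0,L)}$ followed by Young's inequality with exponents $(2,2)$, which forces the weight $\Vert(y+z)_x\Vert_{L^2}^2$. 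To recover the stated linear dependence one must use the interpolation $\Vert w\Vert_{L^\infty(0,L)}^2\le 2\Vert w\Vert_{L^2(0,L)}\Vert w_x\Vert_{L^2(0,L)}$, giving a bound $C\Vert(y+z)_x\Vert_{L^2}\Vert w\Vert_{L^2}^{3/2}\Vert w_x\Vert_{L^2}^{1/2}$, and then distribute the exponents between Young's inequality and a H\"older estimate in time so that $\Vert(y+z)_x\Vert_{L^2}$ enters the integrated weight to the first power; this is the bookkeeping actually carried out in \cite{coroncrepeau2004missed}. That said, the weaker quadratic version you obtain is already sufficient for the only purpose the lemma serves in this paper, namely uniqueness of the mild solution (take $y_0=z_0$), so the defect is in matching the literal statement rather than in the role the lemma plays.
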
 

\begin{proof}
Due to space limitation the proof has not been written. The interested reader can however refer to \cite{coroncrepeau2004missed} and \cite{chapouly2009global} where proofs of very similar results are provided.
\end{proof}

We aim at removing the smallness condition given by $T^\prime$ in Lemma \ref{local-wp}. Since we have the local well-posedness, we only need to prove the following a priori estimate for any solution to (\ref{nlkdv_sat}). 

\begin{lemma}
\label{global-estimation}
For given $T>0$, there exists $K:=K(T)>0$ such that for any $y_0\in L^2(0,L)$, for any $0< T^\prime\leq T$ and for any mild solution $y\in \mathcal{B}(T^\prime)$ to (\ref{nlkdv_sat}), it holds
\begin{equation}
\label{glob-estim-chapouly}
\Vert y\Vert_{\mathcal{B}(T^\prime)}\leq K\Vert y_0\Vert_{L^2(0,L)}.
\end{equation}
\end{lemma}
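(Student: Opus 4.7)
The plan is to establish two a priori energy estimates on the mild solution. The first, obtained by multiplying the equation by $y$, controls the $L^\infty(0,T';L^2(0,L))$ part of the $\mathcal{B}(T')$ norm. The second, obtained through the weighted multiplier $xy$, controls the $L^2(0,T';H^1(0,L))$ part. Throughout I would exploit the pointwise inequality $y(x)\,\sat(ay)(x)\geq 0$, which holds because $\sat(ay)(x)$ is always a nonnegative scalar multiple of $y(x)$. Since these computations involve derivatives that a mild solution does not a priori possess, I would first justify them for $y_0\in D(A)$ (where the solution is classical) and then extend to arbitrary $y_0\in L^2(0,L)$ by density, using the continuous dependence given by Proposition \ref{rosier-fixed}.

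For the first estimate, multiplying \eqref{nlkdv_sat} by $y$ and integrating over $[0,L]$: the transport term $yy_x$ contributes nothing since $\int_0^L y^2 y_x\,dx = \tfrac{1}{3}[y^3]_0^L=0$, the dispersive term yields $\tfrac{1}{2}|y_x(t,0)|^2$ after three integrations by parts using the boundary conditions of \eqref{nlkdv_sat}, and $\int_0^L y\,\sat(ay)\,dx\geq 0$. This gives
\begin{equation*}
\tfrac{1}{2}\tfrac{d}{dt}\|y(t,\cdot)\|_{L^2(0,L)}^2 + \tfrac{1}{2}|y_x(t,0)|^2 + \int_0^L y\,\sat(ay)\,dx = 0,
\end{equation*}
so that $\|y(t,\cdot)\|_{L^2(0,L)}\leq \|y_0\|_{L^2(0,L)}$ for every $t\in[0,T']$.

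For the $H^1$ bound, multiplying by $xy$ and performing the analogous integrations by parts (all boundary contributions vanish thanks to $y(t,0)=y(t,L)=y_x(t,L)=0$) leads to the identity
\begin{equation*}
\tfrac{1}{2}\tfrac{d}{dt}\int_0^L xy^2\,dx + \tfrac{3}{2}\int_0^L y_x^2\,dx + \int_0^L xy\,\sat(ay)\,dx = \tfrac{1}{2}\int_0^L y^2\,dx + \tfrac{1}{3}\int_0^L y^3\,dx.
\end{equation*}
Integrating in time, dropping the two nonnegative terms $\tfrac{1}{2}\int_0^L xy(T',x)^2dx$ and $\int_0^{T'}\!\!\int_0^L xy\sat(ay)\,dxdt$ that appear on the left-hand side, and invoking the first estimate to bound $\int_0^{T'}\|y\|_{L^2(0,L)}^2\,dt$, one obtains
\begin{equation*}
\tfrac{3}{2}\int_0^{T'}\|y_x(t,\cdot)\|_{L^2(0,L)}^2\,dt \leq \tfrac{L+T'}{2}\|y_0\|_{L^2(0,L)}^2 + \tfrac{1}{3}\int_0^{T'}\int_0^L |y|^3\,dxdt.
\end{equation*}

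The cubic term is the main technical obstacle. I would control it by the Gagliardo–Nirenberg interpolation $\|y\|_{L^3(0,L)}^3\leq c\|y\|_{L^2(0,L)}^{5/2}\|y_x\|_{L^2(0,L)}^{1/2}$, valid for $y\in H^1_0(0,L)$, combined with Young's inequality applied with conjugate exponents $4/3$ and $4$ and with the uniform bound $\|y(t,\cdot)\|_{L^2(0,L)}\leq\|y_0\|_{L^2(0,L)}$ from the previous step. This produces a term of the form $\epsilon\int_0^{T'}\|y_x\|_{L^2(0,L)}^2\,dt + C_\epsilon T'\|y_0\|_{L^2(0,L)}^{10/3}$; choosing $\epsilon>0$ small enough to absorb the gradient contribution into the left-hand side yields a bound of the form $\int_0^{T'}\|y_x\|_{L^2(0,L)}^2\,dt\leq \tilde K(T,\|y_0\|_{L^2(0,L)})\|y_0\|_{L^2(0,L)}^2$. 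Combining with the $L^\infty_t L^2_x$ bound gives the required inequality \eqref{glob-estim-chapouly}, with $K$ depending on $T$ through an expression that also involves $\|y_0\|_{L^2(0,L)}$ via the cubic correction. This bound prevents finite-time blow-up of $\|y\|_{\mathcal{B}(T')}$ so that, coupled with Lemma \ref{local-wp} and Lemma \ref{uniqueness-coron-crepeau}, the local solution extends uniquely to the full interval $[0,T]$, completing the proof of Theorem \ref{nl-theorem-wp}.
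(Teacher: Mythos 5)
Your proof is correct and follows essentially the same route as the paper: the multiplier $y$ gives the $L^\infty(0,T';L^2)$ bound using $y\,\sat(ay)\geq 0$ pointwise, and the multiplier $xy$ gives the $L^2(0,T';H^1)$ bound after absorbing the cubic term into the gradient via interpolation and Young's inequality (the paper uses $\sup_x|y|\leq\sqrt{L}\,\Vert y_x\Vert_{L^2}$ where you use Gagliardo--Nirenberg, a cosmetic difference). Your explicit remarks that the computation should first be justified for regular data and that the resulting constant also depends on $\Vert y_0\Vert_{L^2(0,L)}$ through the quartic (resp.\ $10/3$-power) correction are both accurate and, if anything, more careful than the paper's write-up.
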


\begin{proof}
Let us fix $0<T^\prime\leq T$. Let us multiply the first line of (\ref{nlkdv_sat}) by $y$ and integrate on $(0,L)$. Using the boundary conditions in \eqref{nlkdv_sat}, we get the following estimates
\begin{equation*}
\int_0^L yy_xdx=0,\hspace{0.3cm}\int_0^L yy_{xxx}dx=\frac{1}{2}|y_x(t,0)|^2,
\end{equation*}
\begin{equation*}
\int_0^L y^2y_xdx=0.
\end{equation*}
Using the fact that $\sat$ is odd, we get that
\begin{equation}
\label{l2-dissipativity}
\frac{1}{2}\frac{d}{dt}\Vert y(t,.)\Vert^2_{L^2(0,L)} \leq -\frac{1}{2}|y_x(t,0)|^2-\int_0^L y\sat(ay)dx\leq 0
\end{equation}
and consequently
\begin{equation}
\label{global-wp-1}
\Vert y\Vert_{L^\infty(0,T^\prime;L^2(0,L)}\leq \Vert y_0\Vert_{L^2(0,L)}.
\end{equation}
It remains to prove a similar inequality for $$\Vert y_x\Vert_{L^2(0,T^\prime;L^2(0,L))},$$
to achieve the proof. To do that, we multiply by $xy$ (\ref{nlkdv_sat}), integrate on $(0,L)$ and use the following
\begin{equation*}
\int_0^L xyy_xdx=-\frac{1}{2}\Vert y\Vert^2_{L^2(0,L)},\hspace{0.2cm} \int_0^L yy_{xxx}dx=\frac{3}{2}\Vert y_x\Vert^2_{L^2(0,L)},
\end{equation*}
\begin{equation}
\label{nl-kdv-diss}
\begin{split}
-\int_0^L xy^2y_xdx &=\left|\frac{1}{3}\int_0^L y^3(t,x)dx\right|,\\
&\leq \frac{1}{3}\sup_{x\in [0,L]}|y(t,x)|\Vert y\Vert^2_{L^\infty(0,T^\prime;L^2(0,L))},\\
&\leq \frac{\sqrt{L}}{3}\Vert y_x\Vert_{L^2(0,L)}\Vert y\Vert^2_{L^\infty(0,T^\prime;L^2(0,L))},\\
&\leq \frac{\sqrt{L}\delta}{6}\Vert y_x\Vert_{L^2(0,L)}+\frac{\sqrt{L}}{6\delta}\Vert y\Vert^4_{L^\infty(0,T^\prime;L^2(0,L))},
\end{split}
\end{equation}
where $\delta$ is chosen such that $\delta:=\frac{3}{\sqrt{L}}$. In this way, we have
\begin{equation}
\begin{split}
\frac{1}{2}\frac{d}{dt}\int_0^L |x^{1/2}y(t,.)|^2&dx-\frac{1}{2} \int_0^L y^2dx+\frac{3}{2}\int_0^L |y_x|^2dx\\
&-\int_0^L xy^2y_xdx =-\int_0^L x\sat(ay)ydx.
\end{split}
\end{equation}
We get using \eqref{nl-kdv-diss} and the fact that $\sat$ is odd
\begin{equation}
\label{H1-inequality}
\begin{split}
\frac{1}{2}\frac{d}{dt}&\int_0^L |x^{1/2}y(t,.)|^2dx +\int_0^L |y_x|^2dx \leq  \\
&\frac{1}{2}\Vert y \Vert^2_{L^\infty(0,T^\prime;L^2(0,L))}+\frac{L}{18}\Vert y\Vert^4_{L^\infty(0,T^\prime;L^2(0,L))}.
\end{split}
\end{equation}
Using (\ref{global-wp-1}) and a Gr\"onwall inequality, we get the existence of a positive value $C_5=C_5(L)>0$ such that
\begin{equation}
\Vert y_x\Vert_{L^2(0,T^\prime;L^2(0,L))}\leq C_5\Vert y_0\Vert_{L^2(0,L)},
\end{equation}
which concludes the proof of Lemma \ref{global-estimation}.
\end{proof}

Using Lemmas \ref{local-wp}, \ref{uniqueness-coron-crepeau} and \ref{global-estimation}, for any $T>0$, we can conclude that there exists a unique mild solution in $\mathcal{B}(T)$ to (\ref{nlkdv_sat}). Indeed, with Lemma \ref{local-wp}, we know that there exists $T^\prime\in (0,T)$ such that there exists a unique solution to \eqref{nlkdv_sat} in $\mathcal{B}(T^\prime)$. Lemma \ref{uniqueness-coron-crepeau} states that if there exists a solution to $\mathcal{B}(T)$, then it is unique. Finally, Lemma \ref{global-estimation} allows us to state the well-posedness for every $T>0$: since the solution $y$ to \eqref{nlkdv_sat} is bounded by its initial condition for every $T^\prime>0$ belonging to $[0,T]$ as stated in \eqref{l2-dissipativity}, we know that there exists an unique solution to \eqref{nlkdv_sat} in $\mathcal{B}(T)$. This concludes the proof of Theorem \ref{nl-theorem-wp}. \hfill \hspace*{1pt}\hfill $\Box$

\subsection{Stability}
\label{sec_stab}
This section is devoted to the proof of Theorem \ref{glob_as_stab}. We need then to prove this lemma.
\begin{lemma}
\label{local_as_stab}
System (\ref{nlkdv_sat}) is \em semi-globally exponentially stable \em in $L^2(0,L)$. In other words, for any $r>0$ there exists a positive constant $\mu$ given in Theorem \ref{glob_as_stab} such that for any $y_0\in L^2(0,L)$ satisfying $\Vert y_0\Vert_{L^2(0,L)}\leq r$, the mild solution $y=y(t,x)$ to (\ref{nlkdv_sat}) satisfies
\begin{equation}
\label{local_exp_def}
\Vert y(t,.)\Vert_{L^2(0,L)}\leq \Vert y_0\Vert_{L^2(0,L)}e^{-\mu t}\qquad \forall t\geq 0.
\end{equation}
\end{lemma}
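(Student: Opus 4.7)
The plan is to take the natural $L^2$-energy $V(t) = \tfrac{1}{2}\|y(t,\cdot)\|_{L^2(0,L)}^2$ as a Lyapunov candidate and to extract an exponential decay rate by establishing a sector-like inequality for the $L^2$-saturation of the form
$$\int_0^L y(t,x)\sat(ay(t,x))\,dx \;\geq\; \mu\,\|y(t,\cdot)\|_{L^2(0,L)}^2,$$
valid along trajectories with $\|y(t,\cdot)\|_{L^2(0,L)}\leq r$, where $\mu = \min\{a,u_s/r\}$.

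First I would reuse the dissipation inequality \eqref{l2-dissipativity} derived in the proof of Lemma \ref{global-estimation}, namely
$$\frac{1}{2}\frac{d}{dt}\|y(t,\cdot)\|_{L^2(0,L)}^2 \;\leq\; -\tfrac{1}{2}|y_x(t,0)|^2 - \int_0^L y(t,x)\sat(ay(t,x))\,dx.$$
Discarding the non-positive boundary term is harmless, and discarding the saturation contribution shows that the ball $\{\|y\|_{L^2(0,L)}\leq r\}$ is forward-invariant; in particular, if $\|y_0\|_{L^2(0,L)}\leq r$ then $\|y(t,\cdot)\|_{L^2(0,L)}\leq r$ for all $t\geq 0$.

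Next I would establish the sector condition by splitting according to the definition \eqref{function-saturation}. If $\|ay(t,\cdot)\|_{L^2(0,L)}\leq u_s$ then $\sat(ay)=ay$, so the saturation integral equals $a\|y(t,\cdot)\|_{L^2(0,L)}^2 \geq \mu\,\|y(t,\cdot)\|_{L^2(0,L)}^2$ by definition of $\mu$. Otherwise $\sat(ay)(x)=u_s\,y(x)/\|y(t,\cdot)\|_{L^2(0,L)}$, and the integral equals $u_s\,\|y(t,\cdot)\|_{L^2(0,L)}$; by the invariance just established this is at least $(u_s/r)\,\|y(t,\cdot)\|_{L^2(0,L)}^2 \geq \mu\,\|y(t,\cdot)\|_{L^2(0,L)}^2$. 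Plugging back into the energy inequality yields $\tfrac{d}{dt}\|y(t,\cdot)\|_{L^2(0,L)}^2 \leq -2\mu \|y(t,\cdot)\|_{L^2(0,L)}^2$, and Gronwall's inequality gives \eqref{local_exp_def}.

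The only delicate point is that a mild solution $y$ need not possess the pointwise-in-time regularity required to write the energy identity in its classical form. I would resolve this by the same density argument implicit throughout Section \ref{sec_wp}: approximate $y_0$ by smooth data in $D(A)$ for which the identity holds classically, and pass to the limit using the continuous dependence of mild solutions on their initial data provided by Lemma \ref{uniqueness-coron-crepeau}. This is the main technical obstacle; once it is dealt with, the Lyapunov-plus-sector-condition argument above is elementary, and the global estimate of Theorem \ref{glob_as_stab} will then follow by applying the lemma with $r=\|y_0\|_{L^2(0,L)}$ and packaging the resulting prefactor as a class $\mathcal{K}$ function $\alpha$.
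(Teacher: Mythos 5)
Your argument is correct and essentially the same as the paper's: the same dissipation inequality \eqref{l2-dissipativity}, the same forward invariance of the ball of radius $r$, and the same sector condition for the $L^2$-saturation (the paper states it pointwise in Lemma \ref{sat-l2-local} and integrates, you write the integrated form directly), followed by Gr\"onwall. One caveat on your closing aside only: Theorem \ref{glob_as_stab} does not follow by simply taking $r=\Vert y_0\Vert_{L^2(0,L)}$, since the rate $\mu=\min\{a,u_s/r\}$ then degenerates for large data; the paper instead waits a finite time $T_r$ until the trajectory enters the region where the saturation is inactive and only then invokes the unsaturated rate $a$, which is where the class $\mathcal{K}$ prefactor comes from.
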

From this result, we will be able to prove the global asymptotic stability of \eqref{nlkdv_sat}, as done at the end of this section. Note moreover that this result is indeed the second part of Theorem \ref{glob_as_stab}. 

\subsubsection{Technical lemma.}
Before starting the proof of the Lemma \ref{local_as_stab}, let us state and prove the following lemma
\begin{lemma}[Sector condition]
Let $r$ be a positive value.
\label{sat-l2-local}
Given a positive value $a$ and $s\in L^2(0,L)$ such that $\Vert s\Vert_{L^2(0,L)}\leq r$, we have
\begin{equation}
\beta(x):=\Big(\sat(as)(x)-k(r) as(x)\Big)s(x)\geq 0,\quad \forall x\in [0,L],
\end{equation}
with
\begin{equation}
\label{sec-cond-gain}
k(r)=\min\left\{\frac{u_s}{ar},1 \right\}.
\end{equation}
\end{lemma}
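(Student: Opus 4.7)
The plan is to prove this by a simple case analysis based on the definition of the saturation map, splitting according to whether $\|as\|_{L^2(0,L)}$ falls below or above the threshold $u_s$. In each case the key factor in $\beta(x)$ will turn out to be $s(x)^2$ multiplied by a nonnegative constant, so pointwise nonnegativity will follow immediately from the bound $\|s\|_{L^2(0,L)} \le r$ and the choice of $k(r)$.

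First I would write $\beta(x) = \bigl(\sat(as)(x) - k(r)\,as(x)\bigr) s(x)$ and treat the \emph{unsaturated regime} $a\|s\|_{L^2(0,L)} \le u_s$. Here $\sat(as)(x) = as(x)$ by definition, so
\begin{equation*}
\beta(x) = \bigl(1 - k(r)\bigr)\,a\,s(x)^2.
\end{equation*}
Since $k(r) = \min\{u_s/(ar), 1\} \le 1$, the factor $1-k(r)$ is nonnegative, which gives $\beta(x) \ge 0$ for all $x\in[0,L]$.

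Next I would turn to the \emph{saturated regime} $a\|s\|_{L^2(0,L)} \ge u_s$, where $\sat(as)(x) = s(x) u_s / \|s\|_{L^2(0,L)}$. Factoring $s(x)^2$ out yields
\begin{equation*}
\beta(x) = s(x)^2 \left( \frac{u_s}{\|s\|_{L^2(0,L)}} - k(r)\,a \right),
\end{equation*}
so the claim reduces to showing $u_s / (a\|s\|_{L^2(0,L)}) \ge k(r)$. The hypothesis $\|s\|_{L^2(0,L)} \le r$ gives $u_s/(a\|s\|_{L^2(0,L)}) \ge u_s/(ar)$, and by construction $k(r) \le u_s/(ar)$; combining these two inequalities closes this case.

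I do not foresee a genuine obstacle: the whole argument is a bookkeeping check that the threshold value $u_s/(ar)$ in the definition of $k(r)$ is precisely what is needed to absorb the worst-case ratio $u_s/(a\|s\|_{L^2(0,L)})$ over $\|s\|_{L^2(0,L)} \le r$. The only subtlety worth flagging is that $\beta$ is defined pointwise in $x$ while the dichotomy above is driven by the global $L^2$-norm of $s$; this is not a problem because within each regime the dependence on $x$ enters only through the common factor $s(x)^2 \ge 0$.
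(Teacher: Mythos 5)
Your proof is correct and follows essentially the same route as the paper: the same dichotomy on whether $\Vert as\Vert_{L^2(0,L)}$ exceeds $u_s$, the same factoring of $s(x)^2$ in each case, and the same chain $u_s/(a\Vert s\Vert_{L^2(0,L)}) \geq u_s/(ar) \geq k(r)$ in the saturated regime. Nothing is missing.
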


\begin{proof}
 Consider the two following cases
\begin{itemize}
\item[1.] $\Vert as\Vert_{L^2(0,L)}\geq u_s$;
\item[2.] $\Vert as\Vert_{L^2(0,L)}\leq u_s$.
\end{itemize}
The first case implies that, for all $x\in [0,L]$
$$
\sat(as)(x)=\frac{as(x)}{\Vert as\Vert_{L^2(0,L)}}u_s.
$$
Thus, for all $x\in [0,L]$
\begin{equation}
\beta(x)=as(x)^2\left(\frac{u_s}{\Vert as\Vert_{L^2(0,L)}}-k(r)\right).
\end{equation}
Since we have
$$
\frac{u_s}{\Vert as\Vert_{L^2(0,L)}} \geq  \frac{u_s}{a r} \geq k(r),
$$
then, we obtain
$$
\beta(x)\geq 0.
$$

The second case implies that, for all $x\in [0,L]$
$$
\sat(as)(x)=as(x)
$$
We obtain that
$$
\beta(x):=(1-k(r))as(x)^2\geq 0.
$$

It concludes the proof of Lemma \ref{sat-l2-local}.
\end{proof}

\subsubsection{Semi-global exponential stability.}
Now we are able to prove Lemma \ref{local_as_stab}. Let $r$ be a positive value such that $\Vert y_0\Vert_{L^2(0,L)}\leq r$.

Note that from \eqref{l2-dissipativity}, we get
\begin{equation}
\label{dissipativity-nl}
\begin{split}
\Vert y\Vert_{L^2(0,L)} &\leq \Vert y_0\Vert_{L^2(0,L)}\\
&\leq r
\end{split}
\end{equation}
With the Lemma \ref{sat-l2-local} and \eqref{l2-dissipativity}, we obtain
\begin{equation}
\frac{1}{2}\frac{d}{dt}\int_0^L |y(t,x)|^2dx\leq -\int_0^L k(r)a |y(t,x)|^2dx
\end{equation}

Thus, with \eqref{sec-cond-gain}, applying the Gr\"onwall lemma leads to
\begin{equation}
\Vert y(t,.)\Vert_{L^2(0,L)}\leq e^{-\mu t}\Vert y_0\Vert_{L^2(0,L)}
\end{equation}
where $\mu$ is defined in the statement of Theorem \ref{glob_as_stab}. Therefore, it concludes the proof of Lemma \ref{local_as_stab}. \null\hfill$\Box$ 

\subsubsection{Proof of Theorem \ref{glob_as_stab}}

We are now in position to prove Theorem \ref{glob_as_stab}, inspired by \cite{rosier2006global}.
 
\begin{proof}
By Lemma \ref{local_as_stab} and \eqref{l2-dissipativity}, if
\begin{equation}
\Vert \tilde{y}_0\Vert_{L^2(0,L)}\leq \frac{u_s}{a},
\end{equation}
then, by dissipativity, we have $\Vert \tilde{y}(t,.)\Vert_{L^2(0,L)}\leq \frac{u_s}{a}$ for all $t\geq 0$. Thus, $\sat(a\tilde{y})=a\tilde{y}$ and we know, from \eqref{rapid-stab}, that the corresponding solution $\tilde{y}$ to (\ref{nlkdv_sat}) satisfies
\begin{equation}
\label{1-nl-stab-lyap}
\Vert \tilde{y}(t,.)\Vert_{L^2(0,L)}\leq \Vert \tilde{y}_0\Vert_{L^2(0,L)}e^{-at}\qquad \forall t\geq 0.
\end{equation}
In addition, for a given $r>0$, there exists a positive constant $\mu$, which is given in \eqref{decay-rate-local}, such that if $\Vert y_0\Vert_{L^2(0,L)}\leq r$, then any mild solution $y$ to (\ref{nlkdv_sat}) satisfies
\begin{equation}
\Vert y(t,.)\Vert_{L^2(0,L)}\leq \Vert y_0\Vert_{L^2(0,L)}e^{-\mu t}\qquad \forall t\geq 0
\end{equation}
Consequently, setting $T_r:=\mu^{-1}\ln\left(\frac{ar}{u_s}\right)$, we have
$$
\Vert y_0\Vert_{L^2(0,L)}\leq r\Rightarrow \Vert y(T_r,.)\Vert_{L^2(0,L)}\leq e^{-\ln\left(\frac{ar}{u_s}\right)}r=\frac{u_s}{a}
$$
Therefore, using \eqref{1-nl-stab-lyap}, we obtain
\begin{equation*}
\begin{split}
\Vert y(t,.)\Vert_{L^2(0,L)} &\leq \Vert y(T_r,.)\Vert_{L^2(0,L)}e^{-a(t-T_r)},\qquad \forall t\geq T_r\\
&\leq \Vert y_0\Vert_{L^2(0,L)}e^{aT_r}e^{-at},\qquad \forall t\geq 0.
\end{split}
\end{equation*}
Thus it concludes the proof of Theorem \ref{glob_as_stab}.
\end{proof}

\section{Simulation}
\label{sec_simu}

Let us discretize the PDE \eqref{nlkdv_sat} by means of finite difference method (see e.g. \cite{nm_KdV} for an introduction on the numerical scheme of a generalized Korteweg-de Vries equation). The time and the space steps are chosen such that the stability condition of the numerical scheme is satisfied (see \cite{nm_KdV} where this stability condition is clearly established).

Given $y_0(x)=1-\cos(x)$, $L=2\pi$, $a=1$ and $T_{final}=6$, Figure \ref{figure1} shows the solution to \eqref{nlkdv_sat} and with the unsaturated control $f=ay$. Figure \ref{figure2} illustrates the simulated solution with the same initial condition and a saturated control $f=\sat(ay)$ where $u_s=0.5$.
The evolution of the $L^2$-energy of the solution in these two cases is given by Figure \ref{figure4}.
\begin{figure}[h!]
   \begin{minipage}[c]{.46\linewidth}
      \includegraphics[scale=0.2]{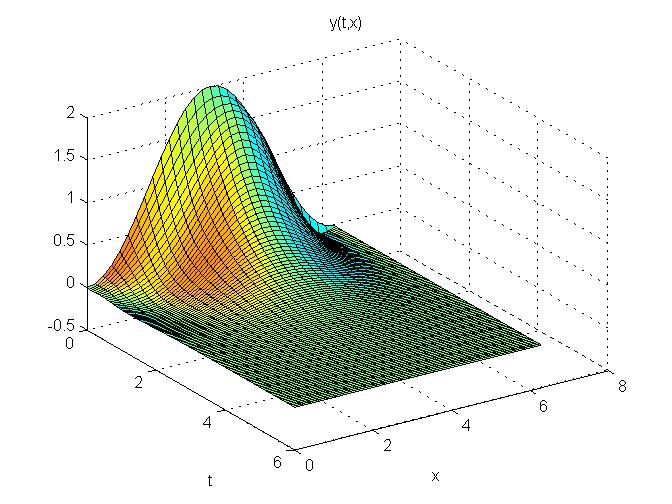}
      \caption{Solution $y(t,x)$ with a feedback law $f=ay$.}
      \label{figure1}
   \end{minipage} \hfill
   \begin{minipage}[c]{.46\linewidth}
      \includegraphics[scale=0.2]{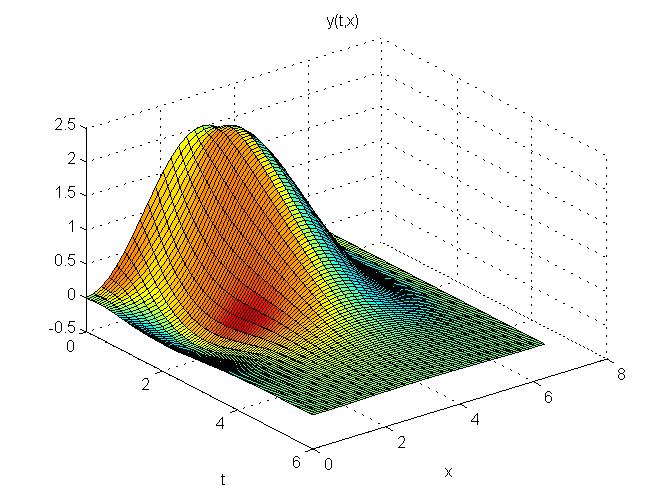}
      \caption{Solution $y(t,x)$ with a feedback $f=\sat(ay)$ where $u_s=0.5$.}
      \label{figure2}
   \end{minipage}\hfill
\end{figure}
\begin{figure}[h!]
      \includegraphics[scale=0.3]{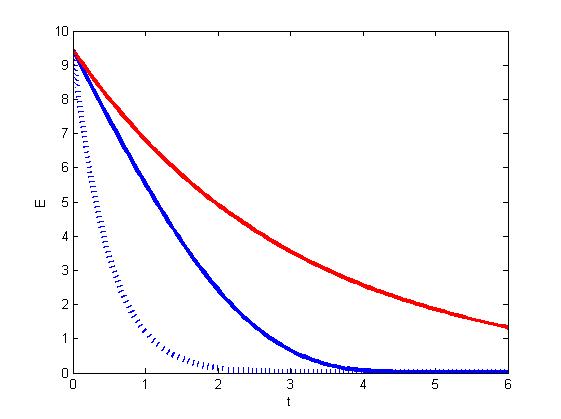}
      \caption{Blue: Time evolution of the energy function $\Vert y\Vert^2_{L^2(0,L)}$ with a saturation $u_s=0.5$ and $a=1$. Red: Time evolution of the theoritical energy $\Vert y_0\Vert^2_{L^2(0,L)}e^{-2\mu t}$. Dotted line: Time evolution of the solution without saturation and $a=1$.}
      \label{figure4}
\end{figure}

\section{Conclusion}
\label{sec_conc}

In this paper, we have studied the well-posedness and the asymptotic stability of a Korteweg-de Vries equation with a saturated distributed control. The well-posedness issue has been tackled by using the Banach fixed-point theorem and we proved the stability by using a sector condition and Lyapunov theory for infinite dimensional systems. Two questions may arise in this context. In \cite{mcpa2015kdv_saturating}, an other saturation function is used. Is the equation with a control saturated with such a function still globally asymptotically stable? Some boundary controls have been already designed in \cite{cerpa_coron_backstepping}, \cite{coron2014local}, \cite{tang2013stabKdV} or \cite{cerpa2009rapid}. By saturating this controllers, are the corresponding equations still stable? 



\bibliography{bibsm}

\end{document}